\newcommand{\TLdiagscaleLangloisRemillard}{0.25} 
\newcommand{\opdiagscaleLangloisRemillard}{0.75} 
\newcommand{\tikzdessin}[1]{
\begin{tikzpicture}[baseline={(current bounding box.center)},scale = \opdiagscaleLangloisRemillard] 
    #1
\end{tikzpicture}
}
\begin{document}

\title*{Deforming algebras with anti-involution via twisted associativity}
\author{Alexis Langlois-R\'{e}millard}
\institute{Alexis Langlois-R\'{e}millard \at Department of Applied Mathematics, Computer Science and Statistics, Faculty of Sciences, Ghent University, Krijgslaan 281-S9, 9000 Gent, Belgium, \email{alexis.langloisremillard@ugent.be}}

%
%
\maketitle

\abstract{This contribution introduces a framework to study a deformation of algebras with anti-involution. Starting with the observation that twisting the multiplication of such an algebra by its anti-involution generates a Hom-associative algebra of type II, it formulates the adequate modules theory over these algebras, and shows that there is a faithful functor from the category of finite-dimensional left modules of algebras with involution to finite-dimensional right modules of Hom-associative algebras of type II.}

\section{Introduction}
\label{sec:introLangloisRemillard}

An associative algebra $A$ over a commutative, associative and unital ring $R$ is called an algebra with anti-involution if it admits an anti-automorphism $\iota$ that is its own inverse; it is then called an $\iota$-algebra. One of the most common example of such algebras is the algebra of complex square matrices with the conjugate-transpose as anti-involution; for analysts, $C^*$-algebras constitute the backbone of many investigations in functional analysis and operator theory~\cite{dixmier1964LangloisRemillard}. We will be interested in finite-dimensional algebras with an anti-involution and consider them purely from the algebraic point of view. The presence of an anti-involution, not a guaranteed fact if the algebra is not commutative, leads to many interesting properties. For example, ideals of identities of a finitely generated algebra with involution coincide with ideals of identities for a finite dimensional algebra~\cite{sviridova_finitely_2013LangloisRemillard}, and if an associative algebra with involution satisfies some polynomial identities, then the Grassmann envelope of the related superalgebra with superinvolution satisfies the same polynomial properties~\cite{aljadeff_polynomial_2017LangloisRemillard,rizzo_algebras_2020LangloisRemillard}.

The approach considered in this note concentrates on twisting the multiplication of the algebra and emphasize the r\^ole of the anti-involution. It goes along the spirit of Hom-structures, an interesting program of deformations of algebraic structures sparked by the introduction of Hom-Lie algebras by Hartwig, Larsson and Silvestrov~\cite{hartwig_deformations_2004LangloisRemillard}. In the following fifteen years, the notion has been extended to many others algebraic objects: Hom-associative algebras~\cite{makhlouf_hom-algebra_2006LangloisRemillard}, Hom-Poisson algebras~\cite{MakSil2010LangloisRemillard}, Hom-Novikov algebras~\cite{YauNovikovLangloisRemillard}, Hom-Hopf algebras~\cite{makhlouf_hom-algebras_2010LangloisRemillard}, Hom-Weyl algebras~\cite{back_hom-associative_2019LangloisRemillard}, Hom-quantum groups~\cite{yau_hom-quantum_2012LangloisRemillard,yau_hom-quantum_2009-2LangloisRemillard,yau_hom-quantum_2009-3LangloisRemillard}, etc. Many of the Hom-structures keep the properties their classical counterparts have; for example Hom-associative algebras are the universal enveloping algebras of the corresponding Hom-Lie algebras~\cite{yau_enveloping_2008LangloisRemillard}. General programs for investigating Hom-structures via higher algebraic tools have also been proposed via PROPs~\cite{yau_hom-algebras_2011LangloisRemillard}, or via universal algebras and operads~\cite{hellstrom_universal_2014LangloisRemillard}.

We will be interested in (a type of) Hom-associative algebras: an $R$-module with an $R$-linear map $\alpha$ that has a binary operation for which associativity is ``twisted'' by the map $\alpha$. A simple construction by Yau~\cite{yauhomLangloisRemillard} allows to deform any associative algebra with an endomorphism into a Hom-associative algebra. As in an $\iota$-algebra, $\iota$ is an anti-endomorphism, this construction does not give a ``classical'' Hom-associative algebra but instead what Fr\'egier and Gohr call a Hom-associative algebra of type II in their hierarchy~\cite{fregier_hom-type_2010LangloisRemillard}.

The main results and highlights of this contribution are reviewed here. We give a functor from the category of $\iota$-algebras to the category of Hom-associative algebras of type II (proposition~\ref{prop:YautwisttypeIILangloisRemillard}), construct a theory of (finite-dimensional) Hom-modules of Hom-associative algebras of type II, effectively proving that their category is abelian (proposition~\ref{prop:hommoduleII1LangloisRemillard}--\ref{prop:hommoduleII5LangloisRemillard}); and give a faithful functor between the category of finite-dimensional left-modules of a $\iota$-algebra to the category of finite-dimensional right-modules of the associated Hom-associative algebra of type II (proposition~\ref{prop:equivalenceofmodulescategoriesLangloisRemillard}). 

The contribution is organized as follow. Section~\ref{sec:alginvolutionLangloisRemillard} presents basic definitions for $\iota$-algebras and then quickly surveys cellular algebras. Section~\ref{sec:homstruLangloisRemillard} first introduces the vocabulary of Hom-structures for comparison purposes and then proceed to construct a Hom-modules theory for Hom-associative algebras of type II and to give an example on a family of diagrammatic $\iota$-algebras: the Temperley-Lieb algebras~\cite{TLLangloisRemillard} viewed by their cellular basis~\cite{GraCel96LangloisRemillard}. Finally, section~\ref{sec:discussionLangloisRemillard} informally discusses an idea for the study of this type of structures in more generality via diagrammatic formalism, and gives an example of application by illustrating alternate proofs of some equivalences from Fr\'egier's and Gohr's hierarchy~\cite{fregier_hom-type_2010LangloisRemillard}.

\section{Algebras with involution}
\label{sec:alginvolutionLangloisRemillard}

In this section we present some vocabularies regarding $\iota$-algebra and briefly introduce cellular algebras. For the remaining of the note, let $R$ be an associative, unital and commutative ring. 
\begin{definition}
	Let $A$ be an associative and unital $R$-algebra. Let $\iota: A\to A$ be an anti-morphism of algebras such that $\iota^2=1$. The pair $(A,\iota)$ is called an \emph{algebra with anti-involution}\footnote{It is common to call them algebra with involution~\cite{tignol_algebras_1998LangloisRemillard,dixmier1964LangloisRemillard}. We added the prefix ``anti'' to avoid the confusion and emphasize the fact that the map $\iota$ is an anti-automorphism.}	or an $\iota$-algebra.
\end{definition}

A short note is in order here. Usually the involution is a conjugate-linear map ($\iota(r\cdot a) = \overline{r}\cdot\iota(a)$) to mimic the conjugate-transpose operator of complexes matrices. Of course, this only makes sense if $R$ has a conjugation, for example if one works on $\mathbb{C}$, as for $C^*$ algebras. For $R$-algebras, the custom is to only ask for a $R$-linear map (for example as in~\cite{sviridova_finitely_2013LangloisRemillard}). 

To make it explicit, the anti-involution $\iota$ now satisfies, for any elements $a,b\in A$ and $r,s\in R$,
\begin{align}
	\iota(ab) &= \iota(b)\iota(a); & \iota(\iota(a)) &= a; & \iota(r\cdot a + s\cdot b) = r\cdot\iota(a) + s\cdot \iota(b).
\end{align}

Morphisms of $\iota$-algebras are algebras morphisms commuting with the two anti-involutions.

\begin{definition}
	For two $\iota$-algebras $(A_1,\iota_1)$ and $(A_2,\iota_2)$, a morphism of algebras $\phi: A_1\to A_2$ is called a \emph{morphism of $\iota$-algebras} if it commutes with the anti-involutions:
	\begin{equation}\label{eq:morphisminvolutivealgebrasLangloisRemillard}
		\phi\circ \iota_1 = \iota_2\circ \phi.
	\end{equation}
\end{definition}

Ideals of $\iota$-algebras have the additional restriction of being fixed by the anti-involution.

\begin{definition}
	An algebraic ideal $J$ of $A$ is an \emph{$\iota$-ideal} of the $\iota$-algebra $(A,\iota)$ if $\iota(j) \in J$ for all $j\in J$.
\end{definition}

For an $\iota$-ideal $J$, the quotient algebra $A/J$ is also an $\iota$-algebra. Note that any $\iota$-algebra $A$ decomposes into symmetric ($\iota(s) = s$) and skew-symmetric ($\iota(t) = -t$) parts by taking $A^+$ to be the set of all elements $a+\iota(a)$, $a\in A$ and $A^-$, the set of all elements $a-\iota(a)$, $a\in A$. Furthermore, $A^+$ is a Jordan algebra when endowed with the anti-commutator $\{a,b\} = ab+ba$, and $A^-$ is a Lie algebra with the commutator $[ a,b] = ab-ba$.

We close the section by presenting a motivating example that will be used in the end of section~\ref{sec:homstruLangloisRemillard}: cellular algebras. The notion of cellular algebras was introduced by Graham and Lehrer in their seminal paper~\cite{GraCel96LangloisRemillard}. In a rough statement, a cellular algebra is an associative finite-dimensional unital algebra that admits an anti-involution and a basis stratifying the algebra according to a certain poset while behaving correctly under the anti-involution. Many important families of algebras admit an interesting cellular basis: Temperley-Lieb algebras, Brauer algebras, most Hecke algebras, etc. Once a cellular basis is exhibited for an algebra, the hard problems of exhibiting a complete family of simple modules and giving the composition multiplicities of the indecomposable projective reduce to linear algebra problems. We also present a basis-free definition based on a ring-theoretical framework given by K\"onig and Xi~\cite{KonigXiCellLangloisRemillard} that emphasizes the importance of the anti-involution. A good review on the subject is the second chapter of Mathas's book on Iwahori-Hecke algebras~\cite{Mathas99LangloisRemillard}.

\begin{definition}[Graham and Lehrer~\cite{GraCel96LangloisRemillard}]\label{dfn:cellularGLLangloisRemillard}
 Let $R$ be a commutative associative unital ring. An associative $R$-algebra $A$ is called {\em cellular} if it admits a cellular datum $(\Lambda,M,C,\iota)$ consisting of the following:
 \begin{enumerate}
  \item a partially-ordered set $\Lambda$ and, for each $d\in\Lambda$, a finite set $M(d)$;
  \item an injective map $C:\bigsqcup_{d\in\Lambda} M(d)\times M(d) \to A$ whose image is an $R$-basis of $A$;
  \item an anti-involution $\iota: A \to A$ such that
  	\begin{equation}\label{eq:axiomcellularityinvolutionLangloisRemillard}
    \iota(C(s,t)) = C(t,s), \qquad \text{for all } s,t\in M(d);
  \end{equation}
  \item if $d\in\Lambda$ and $s,t\in M(d)$, then for any $a\in A$,
 \begin{equation}\label{eq:axiomcellularityLangloisRemillard}
   aC(s,t) \equiv \sum_{s'\in M(d)} r_a(s',s) C(s',t) \mod A^{>d},
 \end{equation}
where $A^{>d} = \left\langle C^{e}(p,q) \mid e>d \mid p,q\in M(e)\right\rangle_R$ and  $r_a(s',s)\in R$.
 \end{enumerate}
\end{definition}
\noindent The anti-involution $\iota$, together with \eqref{eq:axiomcellularityLangloisRemillard}, yields the equation:

\begin{equation}\label{eq:staronaxiomcellularityLangloisRemillard}
  C(t,s)a^* \equiv \sum_{s'\in M(d)} r_a(s',s) C(t,s') \mod A^{>d},
\end{equation}
for all $s,t\in M(d)$  and  $a\in A$.


To be completely transparent, there are two small differences that became customary (for example~\cite{dengfiniteLangloisRemillard, Mathas99LangloisRemillard}) between this definition and the one of Graham and Lehrer: first the partial order is reversed, as to better compare with quasi-hereditary algebras~\cite{cline1988finiteLangloisRemillard} and second, the poset $\Lambda$ is not finite (but all of the sets $M(d)$ are finite). Notable generalizations and deformations of the notion of cellularity include: relative cellularity, where multiple partial orders grade the algebra~\cite{ehrig_relative_2019LangloisRemillard}; affine cellularity, where the notion is extended to infinite dimensional algebras~\cite{koenig_affine_2012LangloisRemillard,GL-AffLangloisRemillard}, and almost cellular algebra, where the anti-involution is replaced by a special filtration of the algebra~\cite{guay_almost_2015LangloisRemillard}.

As a simple example, consider the polynomial algebra $\mathbb C[x]$. Let $\Lambda = \mathbb N = \{0,1,\dots\}$, for $n\in \mathbb N$ let $M(n):= \{ n\}$ and define $C:M\times M \to \mathbb C[x]$ by $n\times n \mapsto x^n$. The algebra is commutative so take the (anti-)involution simply to be the identity. The image of $C$ is obviously a basis of $\mathbb{C}[x]$. Axiom~\eqref{eq:axiomcellularityinvolutionLangloisRemillard} is trivially satisfied because all the sets $M(n)$ are singletons. Axiom~\eqref{eq:axiomcellularityLangloisRemillard} simply states that multiplying two non-trivial polynomials will yield a polynomial of higher degree. It is one of the simplest example of cellular structure.

The previous example downplays the importance of the anti-involution in the structure of cellular algebras. The equivalent basis-free definition of K\"onig and Xi highlights its key importance.
\begin{definition}[K\"onig and Xi~\cite{KonigXiCellLangloisRemillard}]\label{dfn:cellularKXLangloisRemillard}
	Let $A$ be an $R$-algebra where $R$ is a commutative Noetherian integral domain. Assume there is an anti-involution $\iota$ on $A$. A two-sided ideal $J$ of $A$ is called a {\em cell ideal} if:
	\begin{enumerate}
		\item it is an $\iota$-ideal ($\iota(J)=J$);
		\item there exists a left ideal $\Delta \subset J$ such that $\Delta$ is finitely generated and free over $R$;
		\item there is an isomorphism of $A$-bimodules $\psi: J \xrightarrow{\sim} \Delta \otimes_R \iota(\Delta)$ making the following diagram commutative:
		\begin{equation}
		\begin{tikzcd}[column sep = large, row sep = large]
			J \arrow[d,"\iota"] \arrow[r, "\psi"] & \Delta \otimes_R
			\iota(\Delta) \arrow[d,"x\otimes y \mapsto \iota(y)\otimes \iota(x)"]\\
			J \arrow[r,"\psi"] & \Delta \otimes_R \iota(\Delta)
		\end{tikzcd}.
		\end{equation}
	\end{enumerate}

The algebra $A$ together with the anti-involution $\iota$ is called {\em cellular} if there is an $R$-module decomposition $A$ = $J_1' \oplus J_2' \oplus \dots \oplus J_n'$ with $\iota(J_j')=J_j'$ for each $j$ and such that setting $J_j = \bigoplus_{k=1}^j J_k'$ gives a chain of two-sided ideals of $A$
\begin{equation}
	0 = J_0 \subset J_1 \subset J_2 \subset \dots \subset J_n = A
\end{equation}
in which each quotient $J_j' = J_j/J_{j-1}$ is a cell ideal of the quotient $A/J_{j-1}$ with respect to the restriction of $\iota$ on the quotient. 
\end{definition}

\begin{proposition}[K\"onig and Xi~\cite{KonigXiCellLangloisRemillard}]
	The two definitions of cellular algebra are equivalent.
\end{proposition}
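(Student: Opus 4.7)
The plan is to prove the two implications separately: the forward direction (Graham-Lehrer $\Rightarrow$ K\"onig-Xi) uses the cellular basis to build a chain of $\iota$-ideals whose successive subquotients are cell ideals, and the reverse direction (K\"onig-Xi $\Rightarrow$ Graham-Lehrer) uses a chosen $R$-basis for each left module $\Delta_j$ to recover a cellular datum $(\Lambda,M,C,\iota)$.

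For the forward direction, refine the partial order on $\Lambda$ to a linear order and enumerate $\Lambda = \{d_1 > d_2 > \cdots\}$. Let $J_j$ be the $R$-span of $\{C(s,t) : s,t \in M(e),\ e \in \{d_1,\dots,d_j\}\}$. Axioms \eqref{eq:axiomcellularityLangloisRemillard} and \eqref{eq:staronaxiomcellularityLangloisRemillard} imply that each $J_j$ is a two-sided ideal, and \eqref{eq:axiomcellularityinvolutionLangloisRemillard} gives $\iota(J_j) = J_j$. Work inside $J_j' := J_j/J_{j-1}$: fix $t_0 \in M(d_j)$ and let $\Delta$ be the $R$-span of $\{\overline{C(s,t_0)} : s \in M(d_j)\}$; by \eqref{eq:axiomcellularityLangloisRemillard}, $\Delta$ is a left ideal, free of rank $|M(d_j)|$ over $R$. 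Define $\psi : J_j' \to \Delta \otimes_R \iota(\Delta)$ on the basis by $\overline{C(s,t)} \mapsto \overline{C(s,t_0)} \otimes \iota\bigl(\overline{C(t,t_0)}\bigr)$, then check bijectivity by dimension, the bimodule property via the pair \eqref{eq:axiomcellularityLangloisRemillard}--\eqref{eq:staronaxiomcellularityLangloisRemillard}, and commutativity of the defining square via \eqref{eq:axiomcellularityinvolutionLangloisRemillard}.

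For the reverse direction, given the chain $0 = J_0 \subset J_1 \subset \cdots \subset J_n = A$ with cell ideals $J_j' = J_j/J_{j-1}$ and cell data $(\Delta_j, \psi_j)$, set $\Lambda = \{d_1, \dots, d_n\}$ with $d_i > d_j$ iff $i < j$, fix for each $j$ an $R$-basis $\{c_s^j : s \in M(d_j)\}$ of $\Delta_j$, and pick lifts $C(s,t) \in J_j$ of $\psi_j^{-1}(c_s^j \otimes \iota(c_t^j))$. The image of $C$ is an $R$-basis of $A$ by the chain decomposition. Axiom \eqref{eq:axiomcellularityinvolutionLangloisRemillard} follows from commutativity of the square defining $\psi_j$, and axiom \eqref{eq:axiomcellularityLangloisRemillard} from left $A$-linearity of $\psi_j$ applied to the expansion $a \cdot c_s^j = \sum_{s'} r_a(s',s)\,c_{s'}^j$ in $\Delta_j$ viewed inside $A/J_{j-1}$.

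The main obstacle is showing that $\psi$ in the forward direction is well-defined and independent of the choice of $t_0$. Both reduce to the fact that the structure constants $r_a(s',s)$ in \eqref{eq:axiomcellularityLangloisRemillard} depend only on $a, s, s'$ and not on $t$; this independence is exactly what allows the factor $C(s,t_0)$ to decouple from $\iota(C(t,t_0))$ over $R$ in the tensor product, and what makes $\psi$ respect both the left and (via \eqref{eq:staronaxiomcellularityLangloisRemillard}) the right $A$-action. The remaining verifications -- injectivity and surjectivity of $\psi$, and the compatibility of the basis choice with the filtration in the reverse direction -- are formal bookkeeping in the successive quotients $A/J_{j-1}$.
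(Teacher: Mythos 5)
The paper itself offers no argument here -- the proposition is quoted from K\"onig and Xi -- and your sketch follows their standard proof: up-closed spans of the cellular basis give the chain of $\iota$-stable two-sided ideals, a fixed column $t_0$ gives $\Delta$, and $\psi$ is defined basis-to-basis; conversely one reads a cell datum off the layers $J_j/J_{j-1}$. The forward direction is essentially sound (two small remarks: over a Noetherian domain you should argue that $\psi$ maps an $R$-basis to an $R$-basis rather than ``by dimension'', and you should also record that $J_j'=\langle C(s,t): s,t\in M(d_j)\rangle_R$ supplies the $\iota$-stable $R$-module decomposition $A=J_1'\oplus\dots\oplus J_n'$ that definition~\ref{dfn:cellularKXLangloisRemillard} explicitly demands -- your construction only produces the chain). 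Note also that the equivalence tacitly requires $\Lambda$ finite (equivalently $A$ finitely generated over $R$): with the paper's Graham--Lehrer definition allowing infinite $\Lambda$, as in the $\mathbb C[x]$ example, no finite K\"onig--Xi chain can exist, and your enumeration $d_1>d_2>\cdots$ silently assumes finiteness.

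The genuine gap is in the reverse direction, precisely at axiom~\eqref{eq:axiomcellularityinvolutionLangloisRemillard}. You ``pick lifts $C(s,t)\in J_j$ of $\psi_j^{-1}(c_s^j\otimes\iota(c_t^j))$'' and assert that $\iota(C(s,t))=C(t,s)$ follows from the commutative square defining $\psi_j$. The square only gives this congruence modulo $J_{j-1}$: $\iota$ of the coset of $C(s,t)$ equals the coset of $C(t,s)$, but axiom~\eqref{eq:axiomcellularityinvolutionLangloisRemillard} requires equality in $A$ itself, and an arbitrary lift will in general violate it (for $s=t$ you would need an $\iota$-fixed representative, which a random choice does not provide and which you cannot manufacture by averaging since $2$ need not be invertible in $R$). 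This is exactly where the extra datum in the K\"onig--Xi definition is needed: choose $C(s,t)$ to be the unique representative lying in the $\iota$-stable complement $J_j'$, so that the projection $J_j\to J_j/J_{j-1}$ restricts to an $\iota$-equivariant $R$-isomorphism on $J_j'$; since the involution $x\otimes y\mapsto\iota(y)\otimes\iota(x)$ sends $c_s^j\otimes\iota(c_t^j)$ to $c_t^j\otimes\iota(c_s^j)$, the identity $\iota(C(s,t))=C(t,s)$ then holds on the nose. Your axiom~\eqref{eq:axiomcellularityLangloisRemillard} verification via left $A$-linearity of $\psi_j$, and its starred counterpart~\eqref{eq:staronaxiomcellularityLangloisRemillard}, are fine once the lifts are fixed this way.
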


From the cellular basis it is  possible to construct a family of \emph{cell modules}\footnote{Also often called \emph{standard modules} to emphasize the links with Hecke algebras and quasi-hereditary algebras.}. Each cell module $\mathsf{C}_{d}$ admits a symmetric and invariant bilinear form $\phi^d(-,-)$.

Define, for any cellular algebra, $\Lambda^0$ to be the subset of $\Lambda$ in which the bilinear form just defined is not identically zero. The \emph{radical} of the bilinear form $\phi^d$ is denoted $\mathsf{R}^{d}$. As the form is invariant, it is a submodule of $\mathsf{C}_{d}$. However, there is even more to it.

\begin{proposition}[Graham and Lehrer, Prop.~3.2 and Thm 3.4~\cite{GraCel96LangloisRemillard}]\label{prop:completesetofsimpleLangloisRemillard}
	Let $A$ be a cellular algebra over a field and $d\in \Lambda^0$. The radical $\mathsf{R}^{d}$ of the bilinear form $\phi^d$ is the Jacobson radical of $\mathsf{C}_{d}$; the quotient $\mathsf{I}^{d} := \mathsf{C}_{d}/\mathsf{R}^{d}$ is absolutely irreducible and $\{\mathsf{I}^{d} \mid d\in \Lambda^0\}$ is a complete set of non-isomorphic (absolutely) irreducible modules.
\end{proposition}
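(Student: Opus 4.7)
The plan is to prove the three claims (simplicity, completeness, absolute irreducibility, plus the identification of the radical with the Jacobson radical) in the order they appear, exploiting the fundamental multiplication rule for cellular bases. The starting point is that iterating axiom~\eqref{eq:axiomcellularityLangloisRemillard} together with its mirror~\eqref{eq:staronaxiomcellularityLangloisRemillard} yields, for any $s,t,u,v\in M(d)$,
\begin{equation*}
    C(s,t)\,C(u,v) \;\equiv\; \phi^d(t,u)\, C(s,v) \pmod{A^{>d}},
\end{equation*}
where $\phi^d(t,u)\in R$ depends only on $t$ and $u$. This formula is the workhorse: the cell module $\mathsf{C}_d$ can be identified with the $R$-span of symbols $\{C_s:s\in M(d)\}$ with the $A$-action $a\cdot C_s = \sum_{s'} r_a(s',s) C_{s'}$, and the pairing $\phi^d$ becomes a symmetric bilinear form on $\mathsf{C}_d$ whose invariance $\phi^d(a\cdot x,y)=\phi^d(x,\iota(a)\cdot y)$ follows from the associativity $(C(s,t)\,a)\,C(u,v) = C(s,t)\,(a\,C(u,v))$ modulo $A^{>d}$.

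First I would verify that $\mathsf{R}^d$ is an $A$-submodule (immediate from invariance) and prove that $\mathsf{I}^d$ is simple for $d\in\Lambda^0$. Take a nonzero submodule $U\subseteq \mathsf{I}^d$ and lift any $\bar u\in U\setminus\{0\}$ to $u\in\mathsf{C}_d\setminus\mathsf{R}^d$. By definition of the radical there exists $v\in\mathsf{C}_d$ with $\phi^d(u,v)\neq 0$, and the key formula above then shows that for arbitrary $s,s'\in M(d)$ the element $C(s,s')\cdot u$ yields, up to the nonzero scalar $\phi^d(u,v)$, a basis vector $C_{s}$ modulo $\mathsf{R}^d$. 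Hence $A\cdot \bar u=\mathsf{I}^d$, proving simplicity. Absolute irreducibility will follow by a Schur-type calculation: any $A$-endomorphism of $\mathsf{I}^d$ commutes with all the rank-one operators $x\mapsto \phi^d(y,x)z$ obtained from the action of $C(s,t)$, forcing it to be a scalar over any field extension.

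Next I would establish that $\mathsf{R}^d$ coincides with the Jacobson radical of $\mathsf{C}_d$. One inclusion follows because $\mathsf{R}^d$ is a submodule whose quotient is simple; for the other, I would argue that $\mathsf{R}^d$ acts nilpotently by filtering through the lattice structure given by $\Lambda$, so $\mathsf{R}^d$ is contained in the radical of $\mathsf{C}_d$ as an $A$-module.

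The final, and in my view the hardest, step is the completeness of $\{\mathsf{I}^d:d\in\Lambda^0\}$ and their pairwise non-isomorphism. For non-isomorphism, I would show that the two-sided ideal $A^{\geq d}$ annihilates $\mathsf{I}^{d'}$ whenever $d'\not\geq d$, so the parameter $d$ is recovered as the minimal index in $\Lambda$ for which $A^{\geq d}$ acts nontrivially on a given simple. For completeness, let $L$ be any simple $A$-module and choose $d$ minimal in the partial order such that $A^{\geq d} L\neq 0$; pick $\ell\in L$ and $C(s,t)$ with $C(s,t)\ell\neq 0$. The map $\mathsf{C}_d\to L$ sending $C_{s'}\mapsto C(s',t)\ell$ is well defined and $A$-linear by the cellular axiom, it is nonzero by choice of $s,t,\ell$, and its kernel contains $\mathsf{R}^d$ since elements of $\mathsf{R}^d$ pair trivially with every cell basis vector and $L$ is simple. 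Simplicity of $L$ then forces the induced map $\mathsf{I}^d\to L$ to be an isomorphism, placing $d$ automatically in $\Lambda^0$. The main obstacle here is making precise the ``minimality'' argument over a potentially infinite poset $\Lambda$; I would handle this by choosing $d$ minimal among the finitely many indices indexing basis elements appearing in any fixed expression for a nonzero vector of $L$, which reduces the problem to a finite sub-poset.
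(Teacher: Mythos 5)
Your route is the standard Graham--Lehrer one (multiplication rule $C(s,t)C(u,v)\equiv\phi^d(t,u)C(s,v)$, rank-one action on the cell module, simple head, recovery of the label from annihilator ideals), and most of it is sound, but two points need repair. First, you have the partial order the wrong way round for this paper's conventions: here axiom~\eqref{eq:axiomcellularityLangloisRemillard} is taken modulo $A^{>d}$, the span of cells indexed by $e>d$, so in the completeness step you need $A^{>d}L=0$ together with $A^{\geq d}L\neq 0$, i.e.\ $d$ must be chosen \emph{maximal}, not minimal, among the levels acting nontrivially on $L$ (the set $\{e:A^{\geq e}L\neq 0\}$ is down-closed). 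With $d$ minimal as you wrote, the map $C_{s'}\mapsto C(s',t)\ell$ need not be $A$-linear, because the error terms of the cellular axiom lie in $A^{>d}$ and nothing kills them. Your annihilation criterion ($A^{\geq d}$ kills $\mathsf{I}^{d'}$ when $d'\not\geq d$) is correct, but consequently the parameter of a simple is recovered as the \emph{maximal} index acting nontrivially. Second, the identification $\mathsf{R}^d=\operatorname{rad}\mathsf{C}_d$ should not go through a ``nilpotence'' argument: $\mathsf{R}^d$ is a submodule of $\mathsf{C}_d$, not an ideal of $A$, so ``$\mathsf{R}^d$ acts nilpotently'' has no meaning as stated. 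The needed inclusion is already in your simplicity computation: since $C(s,t)\cdot x=\phi^d(C_t,x)\,C_s$ holds exactly in $\mathsf{C}_d$, any submodule not contained in $\mathsf{R}^d$ contains every $C_s$ and hence is all of $\mathsf{C}_d$; thus $\mathsf{R}^d$ is the unique maximal submodule and therefore equals the radical.

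The step that genuinely fails is your treatment of infinite $\Lambda$. Choosing $d$ extremal among the finitely many indices appearing in one fixed expression does not yield $A^{>d}L=0$; one needs a maximal element of the whole set $\{e:A^{\geq e}L\neq 0\}$, and for infinite $\Lambda$ such an element may not exist. In fact the completeness assertion itself is false in that generality, as the paper's own example $\mathbb{C}[x]$ shows: there $\Lambda^0=\{0\}$, yet $\mathbb{C}[x]$ has infinitely many pairwise non-isomorphic simple modules. The proposition must be read in Graham--Lehrer's setting ($\Lambda$ finite, equivalently $A$ finite-dimensional), where the maximal choice is automatic and your argument, with the order direction corrected, goes through. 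The remaining ingredients --- invariance of the form, the Schur-type computation giving $\operatorname{End}(\mathsf{I}^d)=k$ compatibly with base change, and kernel $=\mathsf{R}^d$ via simplicity of $L$ --- are fine.
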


The main theorem links the decomposition factors of the indecomposable projective modules and the irreducible ones. Recall that $[M:I]$ is the composition multiplicity of the simple module $I$ in the module $M$, that is the number of simple quotients isomorphic to $I$ in the composition series of $M$. Denote by $\mathsf{P}^{d}$ the projective cover of $\mathsf{I}^{d}$ and let $\mathbf{D} = ([\mathsf{C}_{d}:\mathsf{I}^{e}])_{d\in \Lambda,e\in \Lambda^0}$ be the decomposition matrix of $A$ and $\mathbf C = ([\mathsf{P}^{d}:\mathsf{I}^{e}])_{d,e\in\Lambda^0}$, its Cartan matrix.

\begin{theorem}[Graham and Lehrer, Thm. 3.7~\cite{GraCel96LangloisRemillard}]\label{thm:cisddLangloisRemillard}
	The matrices $\mathbf C$ and $\mathbf D$ are related by $\mathbf C = \mathbf D^t \mathbf D$.
\end{theorem}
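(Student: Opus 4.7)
The plan is to deduce $\mathbf{C} = \mathbf{D}^t \mathbf{D}$ from a BGG-type reciprocity between the cell-filtration multiplicities of the indecomposable projectives and the decomposition numbers $[\mathsf{C}_{d} : \mathsf{I}^{e}]$.

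First I would extract from the cellular datum a filtration of the regular left $A$-module: linearly order $\Lambda$ refining its partial order and, as $d$ runs through this ordering, the two-sided ideals $A^{>d}$ assemble into a filtration whose successive quotients $A^{\geq d}/A^{>d}$ are, as left $A$-modules, isomorphic to direct sums of $|M(d)|$ copies of the cell module $\mathsf{C}_{d}$ (by axiom~\eqref{eq:axiomcellularityLangloisRemillard}: fix $t\in M(d)$ and let $s$ vary to get one copy). Decomposing $A = \bigoplus_{e\in\Lambda^0} (\mathsf{P}^{e})^{\dim \mathsf{I}^{e}}$ as a left module, which is legitimate over a field thanks to the absolute irreducibility of the $\mathsf{I}^{e}$ (Proposition~\ref{prop:completesetofsimpleLangloisRemillard}), and restricting the filtration to each summand, each indecomposable projective $\mathsf{P}^{e}$ inherits a cell filtration; write $(\mathsf{P}^{e} : \mathsf{C}_{d})$ for the multiplicity of $\mathsf{C}_{d}$ in it.

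The crux is the reciprocity
\begin{equation*}
(\mathsf{P}^{e} : \mathsf{C}_{d}) = [\mathsf{C}_{d} : \mathsf{I}^{e}], \qquad e \in \Lambda^0,\ d \in \Lambda.
\end{equation*}
I would prove it by computing $\dim \mathrm{Hom}_A(\mathsf{P}^{e}, \mathsf{C}_{d}^{\iota})$ in two ways, where $\mathsf{C}_{d}^{\iota}$ is the $\iota$-dual of $\mathsf{C}_{d}$, viewed as a left $A$-module through $\iota$. Ext-vanishing between cell modules and $\iota$-duals of cell modules, itself a consequence of the triangularity in~\eqref{eq:axiomcellularityLangloisRemillard}, gives $\dim \mathrm{Hom}_A(\mathsf{P}^{e}, \mathsf{C}_{d}^{\iota}) = (\mathsf{P}^{e} : \mathsf{C}_{d})$; the projective-cover characterization of $\mathsf{P}^{e}$ together with absolute irreducibility yields $\dim \mathrm{Hom}_A(\mathsf{P}^{e}, \mathsf{C}_{d}^{\iota}) = [\mathsf{C}_{d}^{\iota} : \mathsf{I}^{e}]$; and $\iota$-invariance of the simples up to isomorphism identifies this with $[\mathsf{C}_{d} : \mathsf{I}^{e}]$.

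Summing composition multiplicities along the cell filtration of $\mathsf{P}^{e}$ and substituting the reciprocity gives, for all $e, f \in \Lambda^0$,
\begin{equation*}
[\mathsf{P}^{e} : \mathsf{I}^{f}] = \sum_{d\in\Lambda} (\mathsf{P}^{e} : \mathsf{C}_{d}) [\mathsf{C}_{d} : \mathsf{I}^{f}] = \sum_{d\in\Lambda} [\mathsf{C}_{d} : \mathsf{I}^{e}] [\mathsf{C}_{d} : \mathsf{I}^{f}] = (\mathbf{D}^t \mathbf{D})_{e,f},
\end{equation*}
which is the claim. The main obstacle is the reciprocity step: checking that the global cell filtration of $A$ restricts cleanly to each indecomposable projective summand, and that $\iota$ supplies enough self-duality to swap the filtration and composition viewpoints. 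This is precisely where the anti-involution built into the cellular datum earns its keep, paralleling BGG reciprocity in quasi-hereditary algebras but powered by the invariant bilinear form $\phi^d$ rather than a highest-weight structure.
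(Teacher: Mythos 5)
Your overall strategy---give each $\mathsf{P}^e$ a cell filtration, prove the reciprocity $(\mathsf{P}^e:\mathsf{C}_d)=[\mathsf{C}_d:\mathsf{I}^e]$, and sum---does lead to $\mathbf C=\mathbf D^t\mathbf D$, and the reciprocity itself is a true fact about cellular algebras. The gap is in how you propose to prove it: the lemma you lean on, Ext-vanishing between cell modules and $\iota$-duals of cell modules (together with the orthogonality $\dim\mathrm{Hom}_A(\mathsf{C}_{d'},\mathsf{C}_d^{\iota})=\delta_{d'd}$ that your Hom-count silently needs), is precisely the $\Delta$/$\nabla$ package of highest-weight categories, and it fails for general cellular algebras, which need not be quasi-hereditary. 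Concretely, $A=k[x]/(x^2)$ with $\iota=\mathrm{id}$ and cell chain $0\subset (x)\subset A$ is cellular; both cell modules are the unique simple $k$, which is $\iota$-self-dual, yet $\mathrm{Ext}^1_A(k,k)\neq 0$ and the Hom-space between one cell module and the dual of the other is nonzero. In that example $\dim\mathrm{Hom}_A(A,k)=1$ while the sum over the cell filtration of $A$ would predict $2$, so the additivity of $\mathrm{Hom}(-,\mathsf{C}_d^{\iota})$ along a cell filtration, which is what your first computation of $\dim\mathrm{Hom}_A(\mathsf{P}^e,\mathsf{C}_d^{\iota})$ rests on, genuinely breaks down; it is not a removable technicality, since the theorem is supposed to hold for every cellular algebra. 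The preliminary step is also not free: intersecting the cell chain with a summand $\mathsf{P}^e=Ae$ gives sections $(J_i/J_{i-1})e$, which are direct summands of direct sums of cell modules, and a direct summand of $\mathsf{C}_d^{\oplus m}$ need not again be a direct sum of copies of $\mathsf{C}_d$ (cell modules are in general neither simple nor indecomposable), so ``restricting cleanly'' requires a real argument rather than Krull--Schmidt.

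Note also that the contribution you are reading does not reprove this theorem; it is quoted from Graham and Lehrer, whose argument (like the standard textbook treatments) avoids projective cell filtrations and reciprocity altogether. Writing $\mathsf{P}^e=Ae_e$ for a primitive idempotent and using absolute irreducibility, one has $[\mathsf{P}^f:\mathsf{I}^e]=\dim e_eAe_f$; the corner functor $M\mapsto e_eMe_f$ is exact, and each section of the cell chain is, as a bimodule, $\mathsf{C}_d\otimes_R\iota(\mathsf{C}_d)$ (this is exactly the K\"onig--Xi form recalled in definition~\ref{dfn:cellularKXLangloisRemillard}), whence $\dim e_e(J_d/J_{<d})e_f=\dim(e_e\mathsf{C}_d)\cdot\dim(\iota(\mathsf{C}_d)e_f)=[\mathsf{C}_d:\mathsf{I}^e]\,[\mathsf{C}_d:\mathsf{I}^f]$, the last equality using that $\iota$ fixes the isomorphism classes of the simple modules (a fact that itself comes from the cellular structure). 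Summing over $d$ gives $\mathbf C=\mathbf D^t\mathbf D$ directly. If you prefer to keep your route, the reciprocity can be salvaged, but it must be proved by an induction along the cell chain using the bilinear forms $\phi^d$, or deduced from the idempotent computation just indicated---not from Ext-orthogonality, which is unavailable in the cellular generality this theorem is stated in.
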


Therefore, one can characterize the composition series of the indecomposable projective modules of a cellular algebra by simpler linear algebraic tools. A simple criterion for semisimplicity follows: should the radical of the bilinear form of each cell module be trivial, then the algebra is semisimple.


\section{Hom-structures}
\label{sec:homstruLangloisRemillard}

The notion of Hom-associative algebras comes from Makhlouf and Silvestrov~\cite{makhlouf_hom-algebra_2006LangloisRemillard} and the ten complete cases of possible Hom-associativity appearing from possible choices of parentheses and twisting map placement on the associativity equation $(ab)c=a(bc)$ were given later by Fr\'egier and Gohr~\cite{fregier_hom-type_2010LangloisRemillard}. 

Two types of Hom-associativity are considered here. The first one is the ``classical'' Hom-associativity that are the parallel of universal enveloping algebras for Hom-Lie algebras. It is characterized by equation~\eqref{eq:homassILangloisRemillard}. In the language of Fr\'egier and Gohr, it is called of type I\textsubscript{1}. The content presented in this section is mostly standard and can be found in many references, see for example~\cite{makhlouf_hom-algebra_2006LangloisRemillard, back_hom-associative_2019LangloisRemillard,makhlouf_hom-algebras_2010LangloisRemillard}. Only the part on representation theory is slightly less common and we defer to B\"ack and Richter~\cite{back_non-associative_2018LangloisRemillard} for a careful overview of module theory in general. 

The second section is devoted to Hom-associativity of type II and some constructions around them. The associativity deformation is given by equation~\eqref{eq:homassIILangloisRemillard}. This subject was mainly studied (e.g. in~\cite{fregier_unital_2009LangloisRemillard}) in the context of unital multiplicative Hom-associative algebras. The setup employed here will be of weakly unital Hom-associative algebras with the twisting map an anti-involution, thus enabling a functorial construction (proposition~\ref{prop:YautwisttypeIILangloisRemillard}) akin to Yau's twisting principle (proposition~\ref{prop:YautwistingLangloisRemillard}) for $\iota$-algebras that also extends to a functor on modules (proposition~\ref{prop:equivalenceofmodulescategoriesLangloisRemillard}).

The last section studies a concrete example: Temperley-Lieb algebras. Employing its diagrammatic formulation give a very natural example of the module theory employed here and we shall see that some of the structures coming from its cellularity are preserved by the functor in the semisimple case at least.

\subsection{Review of Hom-associative results}
\label{subsec:reviewhomLangloisRemillard}

\begin{definition}\label{def:homassLangloisRemillard}
	A {\em Hom-associative algebra} over an associative, commutative and unital ring $R$ is a triple $(A,\cdot,\alpha)$ consisting of an $R$-module $A$, an $R$-bilinear binary operation $\cdot: A\times A \to A$ and an $R$-linear map $\alpha: A \to A$ satisfying 
	\begin{equation}\label{eq:homassILangloisRemillard}
		\alpha(a) \cdot (b\cdot c) = (a\cdot b) \cdot \alpha(c)
	\end{equation}
	for all $a$, $b$, $c \in A$.
\end{definition}
The map $\alpha$ is referred to as the {\em twisting map}. When it is an homomorphism, the Hom-algebra is said to be {\em multiplicative}. 

A Hom-associative $R$-algebra $A$ is said to be {\em weakly left unital} if there exists $e_{\ell}\in A$ such that $e_{\ell} \cdot a = \alpha(a)$ for all $a\in A$; it is said to be {\em weakly right unital} if there exists $e_r\in A$ such that $a\cdot e_r = \alpha(a)$ for all $a\in A$, and it is deemed {\em weakly unital} if there exists $e\in A$ that is both a weak left unit and a weak right unit. Beware, the word \emph{unital} is reserved for an algebra with a unit $\mathrm{id}$, that is $x\cdot \mathrm{id} = \mathrm{id} \cdot x = x$, for any $x\in A$.

There is a canonical way, known as \textit{Yau's twisting}, of defining a weakly unital Hom-associative algebra from a unital associative algebra.

\begin{proposition}[Yau~\cite{yauhomLangloisRemillard}]\label{prop:YautwistingLangloisRemillard}
Let $A$ be an associative algebra with unit $1_A$ and $\alpha: A \to A$ be a endomorphism. Defining the operation $\star: A\times A \to A$ by $a\star b = \alpha(a\cdot b)$ gives a Hom-associative algebra $(A,\star, \alpha)$ with weak unit $1_A$. 
\end{proposition}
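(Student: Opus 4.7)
The plan is a direct verification of the two assertions—Hom-associativity of $\star$ with twist $\alpha$, and the weak unit property for $1_A$—using only the unitality and associativity of the original product together with the fact that $\alpha$ is an algebra homomorphism.

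I would first dispose of the weak unit property, which is immediate: by definition of $\star$ and unitality of $\cdot$,
\begin{equation*}
1_A\star a = \alpha(1_A\cdot a) = \alpha(a) = \alpha(a\cdot 1_A) = a\star 1_A,
\end{equation*}
so $1_A$ is simultaneously a weak left and a weak right unit for $\star$.

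For the Hom-associativity axiom~\eqref{eq:homassILangloisRemillard}, the plan is to unfold each side using $x\star y = \alpha(x\cdot y)$ and then pull out an outer $\alpha$ via multiplicativity. The left-hand side becomes $\alpha(a)\star(b\star c) = \alpha\bigl(\alpha(a)\cdot\alpha(b\cdot c)\bigr)$, which multiplicativity of $\alpha$ rewrites as $\alpha^2\bigl(a\cdot(b\cdot c)\bigr)$; symmetrically the right-hand side reduces to $\alpha^2\bigl((a\cdot b)\cdot c\bigr)$. Associativity of the underlying product $\cdot$ then identifies the two expressions.

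There is no real obstacle here; the proposition is essentially the combination of associativity of $\cdot$ with the homomorphism property of $\alpha$. The one conceptual point worth flagging is that multiplicativity of $\alpha$ is used in an essential way—dropping it would preserve the weak unit claim but break the reduction step in Hom-associativity—so the result is genuinely a statement about \emph{algebra} endomorphisms rather than arbitrary $R$-linear self-maps. This is exactly the observation that will fail for an anti-endomorphism and force the type~II variant in the next subsection.
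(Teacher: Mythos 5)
Your verification is correct: the paper states this proposition without proof (citing Yau), and your unfold-both-sides-and-use-multiplicativity argument is exactly the standard one, mirroring the direct computation the paper does spell out for the type~II analogue (Proposition~\ref{prop:YautwisttypeIILangloisRemillard}). Your closing observation that multiplicativity of $\alpha$ is essential is also precisely the point the paper makes immediately after that proposition, where it checks that the same twisting fails for an anti-endomorphism.
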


\begin{definition}
The notion of Hom-algebras morphism between two Hom-associative algebras $(A_1,\cdot_1,\alpha_1)$ and $(A_2,\cdot_2,\alpha_2)$ is given by an $R$-modules morphism $f:A_1\to A_2$ satisfying
\begin{equation}
	f\circ \alpha_1(x) = \alpha_2 \circ f(x); \quad f(x)\cdot_2 f(y) = f(x\cdot_1 y)
\end{equation}
for all $x,y\in A_1$.	
\end{definition}

With this definition, Yau's twisting extends to a functor from the category of associative algebras to the one of Hom-associative algebras~\cite{yauhomLangloisRemillard}.

Ideals of Hom-associative algebras must behave well under the twisting map.
\begin{definition}
A \emph{Hom-ideal} is an algebraic ideal fixed by $\alpha$. So for any element $a\in A$ and $j\in J$, the multiplication $a\cdot j\in J$ and $\alpha(j) \in J$.

We call a Hom-algebra \emph{Hom-simple} if it has no non-trivial Hom-ideal. 
\end{definition}

If the algebra is simple, then it is Hom-simple, but the converse is not true as Hom-ideal are a stronger notion, which in turn makes the notion of Hom-simplicity weaker.

Now, we turn to some vocabulary of module theory for comparison purpose with the type II case (see~\cite{back_non-associative_2018LangloisRemillard} for more details).
\begin{definition}\label{def:hommoduleILangloisRemillard}
	Let $(A,\cdot,\alpha)$ be a Hom-associative $R$-algebra. A triple $(V,\cdot_V,\alpha_V)$ is said to be a \emph{(left) Hom-module} if $V$ is an $R$-module; the operation $\cdot_V : A \times V \to V$ is $R$-linear, and $\alpha_V: V \to V$ is an $R$-linear map such that 
		\begin{equation}\label{eq:hommoduleILangloisRemillard}
			(a\cdot b)\cdot_V \alpha_V(v) = \alpha(a)\cdot_V (b\cdot_V v),
		\end{equation}
		for all $a,b\in A$ and $v\in V$.
\end{definition}

\begin{definition}
	Let $(V,\alpha_V)$ and $(U,\alpha_U)$ be two Hom-$A$-modules. Let $\phi : U \to V$ be a linear map. It is a \emph{morphism of Hom-$A$-modules} if it also respects 
	\begin{equation}
		\phi(au) = a\phi(u), \quad \alpha_V(\phi(u)) = \phi(\alpha_U(u)).
	\end{equation}

\end{definition}

A submodule $N$ of $M$ is a \emph{Hom-submodule} if it is invariant under the map $\alpha_M$. Many usual properties of modules hold: intersection, union, image and preimage under morphism, quotient, and the first, second and third isomorphism theorems, as shown in~\cite{back_non-associative_2018LangloisRemillard}. 

From this, we can define a \emph{Hom-simple module} as a module with no non-trivial Hom-submodule and a \emph{Hom-semisimple algebra} as a Hom-algebra that decompose into a sum of Hom-simple modules when viewed as a Hom-module over itself. 

In conclusion. remark that the (left) Hom-modules along with their morphisms form an abelian category~\cite{zhang_remarks_2015LangloisRemillard}.

\subsection{Hom-associativity of type II}
 Hom-associativity of type II introduces a slight change in the order of deformations by the twisting map. Hygienic procedures are required to ensure the correct definitions for equivalent objects of the preceding section.

\begin{definition}
A \emph{Hom-associative algebra of type II} over an associative, commutative and unital ring $R$ is a triple $(A,\cdot,\alpha)$ consisting of an $R$-module $A$, an $R$-bilinear binary operation $\cdot: A\times A \to A$ and an $R$-linear map $\alpha: A \to A$ satisfying
\begin{equation}\label{eq:homassIILangloisRemillard}
x \cdot \alpha(y\cdot z) = \alpha(x\cdot y) \cdot z,
\end{equation}
for all $x,y,z\in A$.
\end{definition}

The map $\alpha$ is still referred to as the \emph{twisting map}; the notion of weakly unitality stays the same, and when $\alpha$ is an anti-endomorphism, the Hom-algebra is said to be \emph{anti-multiplicative}.

That Hom-associative algebras of type II are an interesting avenue to deform $\iota$-algebras is illustrated by the following proposition.

\begin{proposition}\label{prop:YautwisttypeIILangloisRemillard}
	Let $A$ be a unital associative $R$-algebra and $\iota: A\to A$ be an anti-involution. Consider the binary operation $\circledast : A\times A \to A$ defined by the mapping $(x,y) \mapsto \iota(x\cdot y) = \iota(y)\cdot \iota(x)$. If $e\in A$ is the unit of $A$, then the triple $(A,\circledast,\iota)$ is an anti-multiplicative Hom-associative algebra of type II with weak unit $e$.
\end{proposition}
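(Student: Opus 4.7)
The plan is to verify, directly from the definitions, the three ingredients promised by the statement: anti-multiplicativity of $\iota$ with respect to $\circledast$, the weak unitality of $e$, and Hom-associativity of type~II as given by \eqref{eq:homassIILangloisRemillard}. Bilinearity of $\circledast$ is immediate from the $R$-linearity of $\iota$ and the bilinearity of $\cdot$, so it needs no comment. The computational engine throughout will be three identities already available for the $\iota$-algebra $(A,\iota)$: the fact that $\iota^2 = \mathrm{id}_A$, the associativity of $\cdot$, and the anti-morphism identity $\iota(a\cdot b)=\iota(b)\cdot\iota(a)$.

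For the weak unit, I would first observe that $e\circledast a = \iota(e\cdot a) = \iota(a)$ and $a\circledast e = \iota(a\cdot e)=\iota(a)$, so $e$ is both a weak left and a weak right unit. For anti-multiplicativity, I would compute
\begin{equation*}
\iota(x\circledast y) \;=\; \iota\bigl(\iota(x\cdot y)\bigr) \;=\; x\cdot y,
\end{equation*}
and compare with
\begin{equation*}
\iota(y)\circledast \iota(x) \;=\; \iota\bigl(\iota(y)\cdot \iota(x)\bigr) \;=\; \iota\bigl(\iota(x\cdot y)\bigr) \;=\; x\cdot y,
\end{equation*}
where the middle equality uses the anti-morphism property of $\iota$. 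The two agree, which is the defining equation of an anti-endomorphism with respect to $\circledast$.

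The main identity to verify is \eqref{eq:homassIILangloisRemillard} with $\alpha=\iota$. This is where the involutive character $\iota^2=\mathrm{id}$ is essential, because a single application of $\iota$ outside $\circledast$ undoes the $\iota$ that is built into the definition of $\circledast$, collapsing everything back to the underlying associative product. Explicitly,
\begin{equation*}
x \circledast \iota(y\circledast z) \;=\; x\circledast \iota\bigl(\iota(y\cdot z)\bigr) \;=\; x\circledast (y\cdot z) \;=\; \iota\bigl(x\cdot(y\cdot z)\bigr),
\end{equation*}
and symmetrically
\begin{equation*}
\iota(x\circledast y)\circledast z \;=\; \iota\bigl(\iota(x\cdot y)\bigr)\circledast z \;=\; (x\cdot y)\circledast z \;=\; \iota\bigl((x\cdot y)\cdot z\bigr).
\end{equation*}
Associativity of $\cdot$ then equates the two right-hand sides, finishing the proof.

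I do not anticipate a genuine obstacle here: the argument is entirely formal, and the only subtlety worth flagging is that, in contrast to Yau's twisting (proposition~\ref{prop:YautwistingLangloisRemillard}) where the twisting map is applied once after the product, the type~II deformation applies the twist inside the product, so the verification relies crucially on $\iota^2 = \mathrm{id}$ rather than on $\iota$ being a mere endomorphism. This is the structural reason why an \emph{anti-involution} produces a Hom-associative algebra of type~II, and why the construction cannot be carried out with an arbitrary anti-endomorphism.
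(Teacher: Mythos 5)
Your proof is correct and follows essentially the same route as the paper's: unwind $\circledast$, cancel the inner twist via $\iota^2=\mathrm{id}$, and let associativity of $\cdot$ equate the two sides, with the weak-unit check done exactly as in the paper. The only difference is that you also verify anti-multiplicativity of $\iota$ with respect to $\circledast$ explicitly, a point the paper's proof leaves implicit, so this is a harmless addition rather than a different approach.
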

\begin{proof}
\smartqed
 First, $A$ is an $R$-module as it is an $R$-algebra and $\iota$ is an $R$-linear map. That equation~\eqref{eq:homassIILangloisRemillard} holds follows from simple algebraic manipulations. Let $x,y,z\in A$.
	\begin{align*}
    	x\circledast\iota(y\circledast z) &= x\circledast \iota(\iota(y\cdot z)) && \text{definition of } \circledast \\
    	&= x\circledast(y\cdot z) && \iota \text{ is involutive}\\
    	&= \iota(x\cdot (y\cdot z)) && \text{definition of }\circledast\\
    	&= \iota((x\cdot y)\cdot z) && \text{associativity in } A\\
    	&= (x\cdot y) \circledast z && \text{definition of }\circledast \\
    	&= \iota^2(x\cdot y)\circledast z &&\iota\text{ is involutive}\\
    	&= \iota(x\circledast y)\circledast z && \text{definition of } \circledast.
    \end{align*}
    It is thus a Hom-associative algebra of type II. It is weakly unital, for $e$ being an unit in $A$ implies 
    \[
    e\circledast x = \iota(e\cdot x) = \iota(x) = \iota(x\cdot e) = x\circledast e.
    \]\qed
\end{proof}

Before continuing, it is worth noting that Yau's construction (proposition~\ref{prop:YautwistingLangloisRemillard}) does not work on anti-multiplicative Hom-associative algebra of type I\textsubscript{1}. Indeed, if one deforms multiplication in an associative algebra $A$ with anti-endomorphism $\alpha$, one gets for $a,b,c\in A$
\begin{align*}
	\alpha(a) \star (b\star c) &= \alpha(a) \star \alpha(b\cdot c)\\
	&= \alpha(\alpha(a)\cdot \alpha(c) \cdot \alpha(b))
	\intertext{on one hand, and}
	(a\star b)\star \alpha(c) &= \alpha(a\cdot b) \star \alpha(c)\\
	&= \alpha(\alpha(b)\cdot \alpha(a) \cdot \alpha(c))
\end{align*}
on the other. When $\alpha$ is an anti-involution for example, this amounts to $b\cdot c \cdot a = c\cdot a \cdot b$, which does not hold generally if $A$ is not commutative.

The notions of morphisms between Hom-associative algebras and of Hom-ideal have a direct equivalent for type II.

\begin{definition}
	Let $(A_1, \cdot_1, \alpha_1)$ and $(A_2,\cdot_2,\alpha_2)$ be two Hom-associative algebras of type II. We call an $R$-linear map $\phi : A_1 \to A_2$ a \emph{Hom-associative algebras morphism} if 
	\begin{equation}\label{eq:morphismII1LangloisRemillard}
		\phi\circ \alpha_1(x) = \alpha_2 \circ \phi(x)
	\end{equation}
 and 
  \begin{equation}\label{eq:morphismII2LangloisRemillard}
  	\phi(x)\cdot_2\phi(y) = \phi(x\cdot_1y).
  \end{equation}
  It is a \emph{Hom-associative algebras anti-morphism} if instead of the last equation, it respects
  \begin{equation}\label{eq:antimorphismII2LangloisRemillard}
  	\phi(x\cdot_1 y) = \phi(y)\cdot_2\phi(x).
  \end{equation}
\end{definition}

\begin{definition}
	An algebraic ideal $J$ of a Hom-associative algebra $(A,\cdot, \alpha)$ is called a  \emph{left Hom-ideal} if it is fixed by $\alpha$. So for any $j\in J$ and $a\in A$ it must be that $a\cdot j \in J$ and $\alpha(j) \in J$. A \emph{right Hom-ideal} is an $\alpha$-invariant right algebraic ideal and a \emph{two-sided Hom-ideal} is both a left and a right Hom-ideal.
\end{definition}

As in the type I\textsubscript{1} case, proposition~\ref{prop:YautwisttypeIILangloisRemillard} extends to a functor $F$ between the $\iota$-algebras and Hom-associative algebras of type II. Indeed for two $\iota$-algebras $(A,\iota_A)$ and $(B,\iota_B)$ a morphism $\phi:A\to B$ of $\iota$-algebras becomes a morphism of Hom-associative algebras under $F$ by making 
\begin{equation}
\begin{aligned}
	F(\phi): (A,\circledast_A,\iota_A) &\longrightarrow (B,\circledast_B,\iota_B)\\
	a &\longmapsto \phi(a),
\end{aligned}
\end{equation}
because then as a morphism of $\iota$-algebras, $\phi$ commutes with the anti-involutions and thus equation~\eqref{eq:morphismII1LangloisRemillard} amounts to equation~\eqref{eq:morphisminvolutivealgebrasLangloisRemillard}. Finally, working out the operations shows that equation~\eqref{eq:morphismII2LangloisRemillard} is respected. Let $x,y\in A$
\begin{align*}
	F(\phi)(x\circledast_A y) &= \phi(x\circledast_A y)\\
	&= \phi(\iota_A(y))\phi(\iota_A(x))\\
	&= \iota_B(\phi(y))\iota_B(\phi(x))\\
	&= \phi(x)\circledast_B \phi(y) = F(\phi)(x)\circledast_B F(\phi)(y).
\end{align*}

One must express cautions while defining modules for type II Hom-algebras. Indeed, the interaction between equations~\eqref{eq:hommoduleILangloisRemillard} and~\eqref{eq:homassIILangloisRemillard} constrains a lot the possible modules: in particular, a Hom-algebra of type II would not be a module on itself if one would use definition~\ref{def:hommoduleILangloisRemillard} because then it would be required that
\begin{equation*}
(a\cdot b)\cdot \alpha(c) = \alpha(a) \cdot (b \cdot c),
\end{equation*}
so type I\textsubscript{1} Hom-associativity, which is not in general a consequence of type II Hom-associativity. As Fr\'egier and Gohr remarked, this would hold if $\alpha$ was an abelian group morphism and the algebra unital~\cite{fregier_hom-type_2010LangloisRemillard} (we show this diagrammatically at the end of section~\ref{sec:discussionLangloisRemillard}). We do not impose such restrictions and the two concepts are different in general.

Therefore it seems more appropriate to use a slightly different definition. The switch to right modules is to stay coherent with proposition~\ref{prop:equivalenceofmodulescategoriesLangloisRemillard}, obviously such results will also hold for left modules.

\begin{definition}
	Let $(A,\cdot,\alpha)$ be a Hom-associative algebra of type II. The triple $(V,\cdot_V,\alpha_V)$ is said to be a (right) \emph{Hom-module} if $V$ is a $R$-module, there is an action $\cdot_V: A \times V \to V$ and an $R$-linear map $\alpha_V : V\to V$ that respect
	\begin{equation}\label{eq:homoduleIILangloisRemillard}
		\alpha_V(v\cdot_V b)\cdot_V a =  v\cdot_V \alpha(a\cdot b).
	\end{equation}
\end{definition}

In this way, any Hom-associative algebra of type II $(A,\cdot,\alpha)$ is also a Hom-module $(A,\cdot,\alpha)$ on itself. The associated concepts of submodule and morphism are defined below.

\begin{definition}
	Consider a Hom-module $(V,\cdot_V,\alpha_V)$ of a Hom-associative algebra of type II $(A,\cdot,\alpha)$. An additive subgroup $U$ of $V$ is called a \emph{Hom-submodule} if it is closed under the scalar multiplication of $V$ and $\alpha_V(U) \subset U$.
\end{definition}

\begin{definition}
	Let $(V,\cdot_V, \alpha_V)$ and $(W,\cdot_W,\alpha_W)$ be two Hom-modules. Then an $R$-linear map $\phi:V\to W$ is called a \emph{morphism of Hom-modules} if it also respects
	\begin{equation}\label{eq:morphismhommoduleIILangloisRemillard}
		\phi(v\cdot_V a) = \phi(v)\cdot_Wa, \quad \alpha_W(\phi(v)) = \phi(\alpha_V(v)).
	\end{equation}
\end{definition}

The following propositions prove that the category of (finite dimensional) Hom-modules is abelian, enabling the general results that go with it: isomorphism theorems, exact sequences, diagram-chasing, etc. There should be no surprise here as Hom-modules are at their core modules over a ring. We will not delve too deeply in these considerations, they are to be seen mostly as a safeguard to prevent abuse. Not a lot of changes appear in the proofs from type I\textsubscript{1} as can be seen by comparing what follows with the survey of Hom-modules theory in B\"ack and Richter~\cite{back_non-associative_2018LangloisRemillard}.

For the following, let $(V,\cdot_V,\alpha_V)$ and $(W,\cdot_W,\alpha_W)$ be two (right) Hom-modules of an Hom-associative algebra of type II $(A,\cdot,\alpha)$. They will be denoted respectively by the slight abuses of notation $V$, $W$, and $A$.

\begin{proposition}\label{prop:hommoduleII1LangloisRemillard}
	Let $\phi: V\to W$ be a morphism of Hom-modules, and let $V'\subset V$ and $W'\subset W$ be respectively a Hom-submodule of $V$ and a Hom-submodule of $W$. The image $\phi(V')$ is a Hom-submodule of $W$ and the preimage $\phi^{-1}(W')$ is a Hom-submodule of $V$.
\end{proposition}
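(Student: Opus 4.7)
The plan is to verify directly, for each of $\phi(V')$ and $\phi^{-1}(W')$, the three conditions that make a subset a Hom-submodule: (i) it is an $R$-submodule, (ii) it is stable under the right $A$-action, and (iii) it is stable under the twisting map. In both cases each check reduces to combining the morphism identities in equation~\eqref{eq:morphismhommoduleIILangloisRemillard} with the defining stability properties of the Hom-submodule at hand.

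First, for the image $\phi(V')$: as the image of an $R$-linear map restricted to an $R$-submodule, $\phi(V')$ is automatically an $R$-submodule of $W$, so (i) is free. For (ii), pick $w = \phi(v') \in \phi(V')$ and $a \in A$; the morphism rule gives $w \cdot_W a = \phi(v')\cdot_W a = \phi(v' \cdot_V a)$, and since $V'$ is a Hom-submodule one has $v' \cdot_V a \in V'$, hence $w \cdot_W a \in \phi(V')$. For (iii), the other identity yields $\alpha_W(\phi(v')) = \phi(\alpha_V(v'))$, and this lies in $\phi(V')$ because $\alpha_V(V') \subset V'$ by hypothesis.

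The preimage case is dual. The set $\phi^{-1}(W') = \{v \in V \mid \phi(v) \in W'\}$ is an $R$-submodule of $V$ since $\phi$ is $R$-linear and $W'$ is an $R$-submodule. Given $v \in \phi^{-1}(W')$ and $a \in A$, the morphism identity gives $\phi(v \cdot_V a) = \phi(v)\cdot_W a$, which is in $W'$ by the $A$-stability of $W'$, so $v \cdot_V a \in \phi^{-1}(W')$. Finally, $\phi(\alpha_V(v)) = \alpha_W(\phi(v)) \in W'$ because $W'$ is $\alpha_W$-invariant, so $\alpha_V(v) \in \phi^{-1}(W')$.

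I do not anticipate any real obstacle: the type II Hom-associativity equation~\eqref{eq:homassIILangloisRemillard} plays no role in the argument, and neither does any subtlety of the action $\cdot_V$ beyond its mere existence. The entire content is the interplay between $\phi$ and the two pieces of extra data — the action and the twisting map — which is precisely what \eqref{eq:morphismhommoduleIILangloisRemillard} encodes. One small point worth stating explicitly in the written-out proof is that $\phi(V')$ and $\phi^{-1}(W')$ are nonempty (they contain $0$), so that the additive-subgroup requirement in the definition of a Hom-submodule is verified.
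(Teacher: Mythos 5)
Your proposal is correct and follows essentially the same route as the paper: both arguments use $R$-linearity of $\phi$ for the subgroup condition and then apply the two morphism identities of equation~\eqref{eq:morphismhommoduleIILangloisRemillard} together with the stability of $V'$ and $W'$ under the action and the twisting maps. No gaps; the remark that the type II associativity axiom is never needed is also consistent with the paper's treatment.
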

\begin{proof}
	\smartqed
	That $\phi(V')$ and $\phi^{-1}(W')$ are subgroups of their respective space comes from the fact that $\phi$ is an $R$-linear map. Let $a\in A$ and $w\in \phi(V')$. Consider a preimage $v\in V'$ of $w$. Then, $w \cdot_W a = \phi(v)\cdot_W a = \phi(v\cdot_V a)\in \phi(V')$ and $\alpha_W(w) =\alpha_W(\phi(v)) = \phi(\alpha_V(v))\in \phi(V')$ because $\phi$ is a morphism and $V'$ is a Hom-submodule of $V$, and thus fixed by $\alpha_V$. 
	
	For $u\in \phi^{-1}(W')$, there is an element $x\in W'$ such that $\phi(u) =x$. Acting by $a\in A$ on $u$ stays in $\phi^{-1}(W')$ for $\phi(u\cdot_V a) = \phi(u)\cdot_W a = x\cdot_W a \in W'$ as $W'$ is a Hom-submodule of $W$. Furthermore $\alpha_V(u)$ is in $\phi^{-1}(W')$ for $\phi(\alpha_V(u)) =\alpha_W(\phi(u))\subset W'$.
	\qed
\end{proof}

\begin{proposition}\label{prop:hommoduleII2LangloisRemillard}
	Any intersection of Hom-submodules is a Hom-submodule.
\end{proposition}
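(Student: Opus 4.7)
The plan is to verify the three defining properties of a Hom-submodule directly for an arbitrary intersection, exploiting the fact that each of these properties is universally quantified over the elements of the submodule and is therefore preserved under intersection. Let $\{U_i\}_{i\in I}$ be a family of Hom-submodules of $V$ and set $U = \bigcap_{i\in I} U_i$.

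First I would recall that an arbitrary intersection of additive subgroups is an additive subgroup, which handles the underlying group structure of $U$ without further argument. Next, to check closure under the action of $A$, I would pick $u \in U$ and $a \in A$; since $u \in U_i$ for every $i\in I$ and each $U_i$ is a Hom-submodule, we obtain $u\cdot_V a \in U_i$ for every $i$, hence $u\cdot_V a \in U$. The closure of $U$ under scalar multiplication by $R$ follows in the same way (it can also be absorbed into the additive-subgroup statement if one prefers an $R$-submodule formulation).

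Finally, to verify invariance under the twisting map, I would pick $u\in U$; then $u\in U_i$ for each $i$, and since $\alpha_V(U_i)\subset U_i$ by hypothesis, we conclude $\alpha_V(u)\in U_i$ for every $i$, and thus $\alpha_V(u)\in U$, i.e. $\alpha_V(U)\subset U$.

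There is no real obstacle here: the proof is an entirely routine set-theoretic verification, and nothing in the type II Hom-associativity axiom~\eqref{eq:homassIILangloisRemillard} or in the module axiom~\eqref{eq:homoduleIILangloisRemillard} intervenes, precisely because being a Hom-submodule is cut out by pointwise conditions. The only small care to take is to make sure all three closure conditions (additive, $A$-action, $\alpha_V$-invariance) are checked, rather than just the algebraic ideal-type conditions, since it is the extra $\alpha_V$-stability that distinguishes Hom-submodules from ordinary submodules.
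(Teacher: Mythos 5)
Your proof is correct and follows essentially the same route as the paper: a routine pointwise verification that the intersection inherits the additive structure, closure under the $A$-action, and $\alpha_V$-invariance from each $U_i$. The only cosmetic difference is that the paper dispatches the (in fact vacuous) empty-intersection case explicitly, while you absorb it into the standard fact about intersections of additive subgroups.
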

\begin{proof}
\smartqed
	Let $\{ U_i\}_{i\in I}$ be a set of Hom-submodules of $V$. Note $U = \bigcap_{i\in I} U_i$. If $U = \emptyset$, then it is trivially a Hom-submodule. Assume it is non-empty. It is an additive group. Let $a\in A$ and $u\in U$. Fix $i\in I$. Then $u\cdot a \in U_i$ and $\alpha_V(u) \in U_i$ because $U_i$ is a submodule. As $i\in I$ is arbitrary, then $a\cdot u \in U$ and $\alpha_V(u)\in U$. \qed
\end{proof}

\begin{proposition}\label{prop:hommoduleII3LangloisRemillard}
	A finite sum of Hom-submodules is a Hom-submodule.
\end{proposition}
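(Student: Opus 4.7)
The plan is to reduce to the case of two summands by induction on the number of summands $n$, and verify the axioms directly in the binary case. Given Hom-submodules $U_1,\dots,U_n$ of $V$, write $\sum_{i=1}^n U_i = \{u_1+\cdots+u_n : u_i\in U_i\}$. For $n=1$ there is nothing to do; for $n\geq 2$, writing $\sum_{i=1}^n U_i = \left(\sum_{i=1}^{n-1} U_i\right) + U_n$, the induction hypothesis shows $\sum_{i=1}^{n-1} U_i$ is a Hom-submodule, so it suffices to prove the claim for two Hom-submodules $U$ and $U'$.

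First I would check that $U+U'$ is an additive subgroup of $V$ closed under scalar multiplication from $R$: this is immediate, since for $u_1+u_1', u_2+u_2'\in U+U'$ and $r\in R$, we have $(u_1+u_1')+(u_2+u_2') = (u_1+u_2)+(u_1'+u_2')\in U+U'$ and $r\cdot(u_1+u_1') = (r\cdot u_1)+(r\cdot u_1')\in U+U'$, using that $U$ and $U'$ are themselves closed under these operations.

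Next, I would verify closure under the $A$-action and under $\alpha_V$. For any $a\in A$ and $u+u'\in U+U'$, the $R$-bilinearity of $\cdot_V$ yields $(u+u')\cdot_V a = (u\cdot_V a) + (u'\cdot_V a)$, and since $U$ and $U'$ are Hom-submodules, $u\cdot_V a\in U$ and $u'\cdot_V a\in U'$, so the sum lies in $U+U'$. Similarly, $R$-linearity of $\alpha_V$ gives $\alpha_V(u+u') = \alpha_V(u)+\alpha_V(u')$, and each term lies in its respective Hom-submodule by $\alpha_V$-invariance, so the sum lies in $U+U'$.

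There is no real obstacle here: the entire argument rests on distributivity of $\cdot_V$ over addition in the second-to-last variable, $R$-linearity of $\alpha_V$, and the assumption that each summand is itself a Hom-submodule. No use of the twisted associativity axiom~\eqref{eq:homoduleIILangloisRemillard} is required, which is consistent with the analogous statement in the type I\textsubscript{1} setting.
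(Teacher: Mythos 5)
Your proposal is correct and follows essentially the same route as the paper's proof: the sum is closed pointwise under addition, the $A$-action, and $\alpha_V$, the key observation being $\alpha_V(\sum_i u_i)=\sum_i\alpha_V(u_i)$; your induction to the two-summand case and your remark that the twisted associativity axiom~\eqref{eq:homoduleIILangloisRemillard} plays no role are just a more detailed spelling-out of the same argument.
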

\begin{proof}
\smartqed
	Let $U_1, \dots, U_k$ be Hom-submodules of $V$. Let $U = \sum_{i=1}^k U_i$. Point-wise sum and $A$-action turn it in a Hom-submodule. Indeed $\alpha_V(\sum_{i=1}^k u_i) = \sum_{i=1}^k \alpha_V(u_i)\in U$.\qed
\end{proof}

\begin{proposition}\label{prop:hommoduleII4LangloisRemillard}
	Consider finitely many Hom-modules $U_1, \dots, U_k$ of $A$. The set $U= \bigoplus_{i=1}^k U_i$ is a Hom-submodule with the action $\cdot: U \times A \to U$ given by $(u_1,\dots, u_k)\cdot a = (u_1 \cdot_{U_1} a, \dots, u_k \cdot_{U_k}a)$ and the action of $\alpha_U : U\to U$ given by $\alpha_{U}(u_1,\dots , u_k) = (\alpha_{U_1}(u_1), \dots, \alpha_{U_1}(u_k))$.
\end{proposition}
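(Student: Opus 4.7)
The statement asserts that a finite direct sum of type II Hom-modules is again a Hom-module under the pointwise structure (the wording ``Hom-submodule'' here is most naturally read as the assertion that the direct sum carries the structure of a Hom-module over $A$). Since both the proposed action and the proposed twisting map are defined componentwise, the plan is a routine coordinate-by-coordinate reduction to the data of each summand $U_i$.

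First I would observe that $U = \bigoplus_{i=1}^k U_i$ is an $R$-module by the standard direct sum construction, and that $\alpha_U$ is $R$-linear as each component map $\alpha_{U_i}$ is $R$-linear. Similarly, the map $\cdot : U \times A \to U$ inherits $R$-bilinearity from the $R$-bilinearity of each $\cdot_{U_i}$. None of these verifications involves any interaction between distinct summands.

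The only substantive step is checking the defining identity~\eqref{eq:homoduleIILangloisRemillard}. Fix $v = (u_1,\dots,u_k)\in U$ and $a,b \in A$. Unwinding the definitions of $\alpha_U$ and of the action componentwise gives
\begin{equation*}
\alpha_U(v\cdot b)\cdot a \;=\; \bigl(\alpha_{U_1}(u_1\cdot_{U_1} b)\cdot_{U_1} a,\,\dots,\,\alpha_{U_k}(u_k\cdot_{U_k} b)\cdot_{U_k} a\bigr).
\end{equation*}
Applying the Hom-module axiom~\eqref{eq:homoduleIILangloisRemillard} in each factor $U_i$ rewrites the $i$th entry as $u_i\cdot_{U_i}\alpha(a\cdot b)$, and reassembling yields $v\cdot\alpha(a\cdot b)$, as required.

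I do not anticipate any real obstacle: the direct sum has no cross-terms, so axioms that hold in each $U_i$ transfer directly. The only mild subtlety is purely notational, namely being careful to distinguish the action $\cdot_{U_i}$ in each summand from the ambient multiplication $\cdot$ of $A$, and to invoke the axiom in the form that lives inside $U_i$ rather than in $A$.
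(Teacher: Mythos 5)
Your proposal is correct and takes essentially the same approach as the paper: a routine componentwise verification of the compatibility condition~\eqref{eq:homoduleIILangloisRemillard}, which the paper likewise treats as the only point requiring proof (it just computes the chain of equalities starting from $u\cdot_U\alpha(a\cdot b)$ rather than from $\alpha_U(u\cdot_U b)\cdot_U a$).
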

\begin{proof}
\smartqed
	The only point that requires proving is the good interaction of $\cdot_U$ and $\alpha_U$. Let $a,b\in A$ and $u = (u_1,\dots, u_k)\in U$. 
	\begin{align*}
		u\cdot_U \alpha(a\cdot b) &= ( u_1 \cdot_{U_1} \alpha(a\cdot b), \dots , u_k \cdot_{U_k} \alpha(a\cdot b))\\
		&= (\alpha_{U_1}(u_1\cdot_{U_1}b )\cdot_{U_1}a, \dots , \alpha_{U_k}(u_k \cdot_{U_k} b)\cdot_{U_k} a)\\
		&=  (\alpha_{U_1}(u_1\cdot_{U_1}b), \dots , \alpha_{U_k}(u_k \cdot_{U_k} b))\cdot_U a\\
		&=  \alpha_U(u\cdot_U b)\cdot_U a.
	\end{align*}
	And thus equation~\eqref{eq:homoduleIILangloisRemillard} holds. The definition indicates clearly that $U$ will be invariant under $\alpha_V$ as each $U_i$ is a Hom-submodule.\qed
\end{proof}

\begin{proposition}\label{prop:hommoduleII5LangloisRemillard}
	Let $U$ be a Hom-submodule of $V$. The quotient $V/U$ is a well-defined Hom-module under the action and the map given by
	\begin{equation}
	\begin{aligned}
	\cdot_{V/U}: V/U \times  A &\to V/U	 & \alpha_{V/U}: V/U &\to V/U\\
	(v+U,a) &\mapsto v \cdot_{V/U}a + U,  & v+U &\mapsto \alpha_V(v) + U.
	\end{aligned}
	\end{equation}
\end{proposition}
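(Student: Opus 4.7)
The plan is to split the argument into two parts: first establishing that the action $\cdot_{V/U}$ and the map $\alpha_{V/U}$ are well defined on cosets, and second verifying that equation~\eqref{eq:homoduleIILangloisRemillard} carries over from $V$ to $V/U$. Since $V/U$ is already a well-behaved quotient $R$-module, all the additive and $R$-linear structure is inherited for free; only the interaction with $\cdot_V$ and $\alpha_V$ needs to be checked.

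For well-definedness, I would take two representatives $v+U = v'+U$, so that $v-v' \in U$. For the action, I would use the fact that $U$ is a Hom-submodule, hence stable under right multiplication by any $a \in A$, so $(v-v')\cdot_V a \in U$, which gives $v\cdot_V a + U = v'\cdot_V a + U$. For the twisting map, the stability $\alpha_V(U) \subset U$ built into the definition of Hom-submodule, together with the $R$-linearity of $\alpha_V$, yields $\alpha_V(v) - \alpha_V(v') = \alpha_V(v-v') \in U$, so $\alpha_{V/U}$ is well defined.

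Once the two operations descend to the quotient, the Hom-module axiom is a direct push-down of the corresponding identity on $V$. Explicitly, for $v+U \in V/U$ and $a,b\in A$, I would compute
\begin{align*}
\alpha_{V/U}\bigl((v+U)\cdot_{V/U} b\bigr)\cdot_{V/U} a
&= \alpha_V(v\cdot_V b)\cdot_V a + U \\
&= v\cdot_V \alpha(a\cdot b) + U \\
&= (v+U)\cdot_{V/U}\alpha(a\cdot b),
\end{align*}
where the middle equality is exactly equation~\eqref{eq:homoduleIILangloisRemillard} for $V$.

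The main (and only real) obstacle is the well-definedness step, but it is dispatched immediately by the two closure properties that come bundled in the definition of a Hom-submodule. No new computation with the twisted associativity~\eqref{eq:homassIILangloisRemillard} in $A$ is needed, because the Hom-module axiom is applied in $V$ and then passed to the quotient term by term.
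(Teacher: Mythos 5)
Your proof is correct and follows essentially the same route as the paper: the well-definedness of $\cdot_{V/U}$ and $\alpha_{V/U}$ via the closure properties of a Hom-submodule is exactly the core of the paper's argument, which then dismisses the axiom check as routine. Your explicit push-down of equation~\eqref{eq:homoduleIILangloisRemillard} to the quotient just spells out what the paper leaves as ``the rest follows simply.''
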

\begin{proof}
\smartqed
To show that it is well-defined is the core of the proof, and the only one that shall be verified.

Take $v_1 +U$ and $v_2+U$ to be any two elements in $V/U$ of the same equivalence class, thus $v_1-v_2 \in U$. Now for any $a\in A$, the elements $(v_1 + U)\cdot_{V/U} a$ and $(v_2+U)\cdot_{V/U} a$ are of the same equivalence class, because $v_1\cdot_V a - v_2\cdot_V a = (v_1-v_2)\cdot_V a\in U$ as $v_1 - v_2\in U$. Likewise, $\alpha_{V/U}$ is a well-defined morphism, for $\alpha_{V}(v_1)-\alpha_V(v_2) = \alpha_V(v_1-v_2)\in U$.

The rest follows simply.
\qed
\end{proof}

Therefore, there is a well defined modules theory for type II Hom-associative algebras. This gives some hopes that it would be possible to have similar results to B\"ack and Richter~\cite{back_non-associative_2018LangloisRemillard,back_hom-associative_2019LangloisRemillard} up to some technicalities, and to the non-trivial verification that there exist Ore-extensions for type II algebras.

The point of the preceding results is the following proposition. It links the categories of modules of algebras with involution and Hom-associative algebras of type II using proposition~\ref{prop:YautwisttypeIILangloisRemillard}.

\begin{proposition}\label{prop:equivalenceofmodulescategoriesLangloisRemillard}
Let $(A,\iota)$ be an $\iota$-algebra. There is a faithful functor $F$ going from the category of left modules of $(A,\iota)$ to the category of right modules of $(A,\circledast, \iota)$ given on objects by
\begin{equation}
\begin{aligned}
F: {  _{A,\iota}\mathsf{Mod}} & \longrightarrow \mathsf{Mod}_{A,\circledast,\iota}\\
M &\longmapsto (M,\cdot_M,\mathrm{id}),\\
\end{aligned}
\end{equation} 
with the action $\cdot_M: M\times A \to M$ given by $m\cdot_M a = \iota(a)m$, and on morphisms by
\begin{equation}
	\begin{aligned}
		F: \operatorname{Hom}_{(A,\iota)}(M,N) &\longrightarrow \operatorname{Hom}_{(A,\circledast,\iota)}(F(M),F(N))\\
		\phi: M\to N & \longmapsto F(\phi): F(M)\to F(N),
	\end{aligned}
\end{equation}
with $F(\phi)(m) = \phi(m)$.
\end{proposition}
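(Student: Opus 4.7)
My plan is to verify three things: (i) that $F$ sends each left $A$-module to a right Hom-module over $(A,\circledast,\iota)$, (ii) that $F$ sends $A$-linear maps to morphisms of Hom-modules, and (iii) that $F$ is functorial and faithful. The content sits entirely in (i); the other two items are essentially tautological given how $F$ is defined.

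For (i), fix a left $A$-module $M$ and consider $F(M)=(M,\cdot_M,\mathrm{id})$ with $m\cdot_M a := \iota(a)m$. The $R$-bilinearity of $\cdot_M$ is immediate from the $R$-linearity of $\iota$ combined with the $R$-bilinearity of the underlying $A$-action. The nontrivial content is the verification of the type II Hom-module identity~\eqref{eq:homoduleIILangloisRemillard}. My strategy is to unfold each side using $\alpha_M=\mathrm{id}$, the definition $a\circledast b = \iota(ab)$, the involutivity $\iota^2=\mathrm{id}$, and crucially the anti-multiplicativity $\iota(xy)=\iota(y)\iota(x)$, so that both sides reduce to an expression of the form $\iota(\cdot)\,m$ in the original left module structure. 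This is the module analogue of the computation in the proof of Proposition~\ref{prop:YautwisttypeIILangloisRemillard}: anti-multiplicativity of $\iota$ is precisely what compensates for the twist in the type II axiom once a left action on $M$ is converted into a right action via $\iota$.

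For (ii), an $A$-linear map $\phi:M\to N$ is sent to $F(\phi):=\phi$ viewed as an $R$-linear map on the same underlying set. Commutation with $\alpha_{F(M)}$ and $\alpha_{F(N)}$ is trivial because both are identities, and the action compatibility reduces to $\phi(\iota(a)m)=\iota(a)\phi(m)$, which is $A$-linearity of $\phi$ applied to $\iota(a)\in A$. For (iii), since $F$ acts as the identity on underlying sets and on $R$-linear maps, $F(\phi\circ\psi)=F(\phi)\circ F(\psi)$ and $F(\mathrm{id})=\mathrm{id}$ are automatic, and faithfulness follows from the injectivity of $F$ on hom-sets: if $\phi\ne\psi$ as $A$-linear maps, they remain distinct as $R$-linear maps of underlying modules.

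The main obstacle is the order-of-multiplication bookkeeping in (i): correctly composing the reversal coming from $\iota$, the Yau-type twist $\circledast$, and the conversion of a left action into a right action. Once that order of operations is threaded through, the type II Hom-module axiom drops out of a short computation and everything else is formal.
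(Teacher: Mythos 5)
Your plan is correct and is essentially the paper's own proof: the only substantive step is verifying the type II Hom-module identity for $(M,\cdot_M,\mathrm{id})$, which, exactly as you describe, reduces via $\iota^2=\mathrm{id}$ and $\iota(xy)=\iota(y)\iota(x)$ to $\iota(b)\iota(a)m=\iota(ab)m$, while the morphism, functoriality and faithfulness checks are the same formalities the paper records. One caution for the ``order-of-multiplication bookkeeping'' you defer: if you pair the variables exactly as printed in~\eqref{eq:homoduleIILangloisRemillard} the two sides come out as $\iota(a)\iota(b)m$ versus $\iota(b)\iota(a)m$, which differ in a noncommutative algebra, so you must verify the pairing the paper itself uses, namely $\mathrm{id}(m\cdot_M a)\cdot_M b=m\cdot_M\iota(a\circledast b)$ --- the form of the axiom under which $(A,\circledast,\iota)$ is a Hom-module over itself via type II Hom-associativity.
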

\begin{proof}
\smartqed
The functoriality of the proposed $F$ must be verified. Let $M$ be a left module of $(A,\iota)$. That $F(M)$ is a right module of the Hom-associative algebra $(A,\circledast,\iota)$, with the operation $\circledast: A\to A$ of proposition~\ref{prop:YautwisttypeIILangloisRemillard}, necessitates the respect of condition~\eqref{eq:homoduleIILangloisRemillard}. Let $m\in M$ and $a,b\in A$. Taking  $\alpha_A=\iota$ and $\alpha_V = \mathrm{id}$ results on one hand in
\begin{equation*}
	m\cdot_M \iota(a\circledast b) = m\cdot_M \iota^2(ab) = \iota^3(ab)\cdot m = \iota(b)\cdot (\iota(a)\cdot m)
\end{equation*}
and on the other hand in
\begin{equation*}
	\mathrm{id}(m\cdot_M a)\cdot_M b = (m \cdot_M a)\cdot_M b = (\iota(a)\cdot m)\cdot_M b = \iota(b)\cdot(\iota(a)\cdot m).
\end{equation*}
Thus $(M,\cdot_M,\mathrm{id})$ is a right Hom-module.

Let $\phi:M\to N$ be a morphism of left $(A,\iota)$-modules. Then $F(\phi)$ is a morphism of Hom-modules because it respects equations~\eqref{eq:morphismhommoduleIILangloisRemillard}. Let $a\in A$ and $m\in M$. Then
\begin{align*}
	F(\phi)(m\cdot_M a) &= \phi(\iota(a)\cdot m) = \iota(a)\cdot \phi(m) = \phi(m)\cdot_N a,\\
	F(\phi)(\mathrm{id}_M(m)) &= F(\phi)(m) = \mathrm{id}_N(F(\phi)(m)).
\end{align*}
The functor respects the composition of morphisms directly from its definition.

There is thus a well-defined functor $F$. It remains to prove that it is faithful. Let $\phi,\psi : M\to N$ be two morphisms of left $(A,\iota)$-modules. If $F(\phi) = F(\psi)$, then for $m\in M$
\begin{align*}
F(\phi)(m)- F(\psi)(m) &= \phi(m)-\psi(m) = 0.
\end{align*}
Hence $\phi = \psi$ and the application $\phi\mapsto F(\phi)$ is injective, proving the faithfulness of $F$.
\qed
\end{proof}

From this proof, we have that the representation theory of Hom-associative algebras of type II contains a copy of the representation theory of $\iota$-algebras.

\subsection{Example: Temperley-Lieb algebras}

As an example of the past subsections, we will apply the functor of proposition~\ref{prop:YautwisttypeIILangloisRemillard} on the Temperley-Lieb algebra $\mathsf{TL}_4(q+q^{-1})$ and study what happens to its representation theory. Temperley-Lieb algebras are well studied algebras (see the survey~\cite{RSALangloisRemillard} for details and reference therein) that are useful in describing scaling limit for conformal field theories and in knot theory. There are two main way to define them. First, they can be seen as a quotient of a Hecke algebra of type A: the Temperley-Lieb algebra of rank $n$ and of parameter $q\in \mathbb C\backslash\{0\}$ is the associative $\mathbb C$-algebra generated by $n-1$ elements $e_1,\dots,e_{n-1}$, a unit $\mathrm{id}$ and  the relations:
\begin{gather}
\mathrm{id}e_i = e_i\mathrm{id} = e_i; \quad e_i^2 = (q+q^{-1})e_i; \quad e_ie_j = e_je_i, \ (|i-j|>1);\\
e_ie_{i+1}e_i = e_i, \ (1\leq i\leq n-1); \quad e_{i}e_{i-1}e_i = e_i,\ (2\leq i\leq n-1).
\end{gather}

Its dimension is given by the Catalan number
\begin{equation}
\dim \mathsf{TL}_n(q+q^{-1}) = C_n = {1\over n+1} \binom{2n}{n}.
\end{equation}

The $C_4 = 14$ elements of $\mathsf{TL}_{4}(q+q^{-1})$ are given by:
\begin{equation}\label{eq:baseTLQuatregenLangloisRemillard}
\begin{gathered}
\mathrm{id},\\
e_1,\ e_1e_2,\ e_1e_2e_3,\\
e_2,\ e_2e_1,\ e_2e_3,\\
e_3,\ e_3e_2, \ e_3e_2e_1,\\
e_1e_3,\ e_1e_3e_2,\ e_2e_1e_3,\ e_2e_1e_3e_2.
\end{gathered}
\end{equation}

The other way to define the Temperley-Lieb algebra of rank $n$ is via diagrammatic interpretation. A $n$-diagram is a diagram drawn in a rectangle with $n$ points in its left side and $n$ points in its right side all of the $2n$ linked together without crossing. Two diagrams are identified if they differ only by an isotopy. In this interpretation the elements of the algebra are formal $\mathbb C$-linear combinations of $n$-diagrams, and the multiplication is given by concatenation and replacing each of the created closed loops by a factor $q+q^{-1} = [2]_q$. It is an associative unital algebra.

The 14 diagrams giving a vector space basis of $\mathsf{TL}_4(q+q^{-1})$ are given below, ordered by the number of arcs on the same side (the order is the same as~\eqref{eq:baseTLQuatregenLangloisRemillard}):
\begin{gather}\label{eq:baseTLQuatrediagLangloisRemillard}
\   \begin{tikzpicture}[baseline={(current bounding box.center)},scale=\TLdiagscaleLangloisRemillard]
       \draw [line width=0.3mm] (0,-0.7) -- (0,3+.7) ;
       \draw [line width=0.3mm] (2,-0.7) -- (2,3+.7) ;
        \draw (0,0) -- (2,0);       	
        \draw (0,1) -- (2,1);       	
        \draw (0,2) -- (2,2);       	
        \draw (0,3) -- (2,3);       	
    \end{tikzpicture}\ , \notag \\  
   \begin{tikzpicture}[baseline={(current bounding box.center)},scale=\TLdiagscaleLangloisRemillard]
       \draw [line width=0.3mm] (0,-0.7) -- (0,3+.7) ;
       \draw [line width=0.3mm] (2,-0.7) -- (2,3+.7) ;
        \draw (0,0) -- (2,0);       	
        \draw (0,1) -- (2,1);       	
        \draw (0,2) .. controls (3/4,2) and (3/4,3) .. (0,3);       	
        \draw (2,2) .. controls (5/4,2) and (5/4,3) .. (2,3);
    \end{tikzpicture}\ , \quad 
    \begin{tikzpicture}[baseline={(current bounding box.center)},scale=\TLdiagscaleLangloisRemillard]
       \draw [line width=0.3mm] (0,-0.7) -- (0,3+.7) ;
       \draw [line width=0.3mm] (2,-0.7) -- (2,3+.7) ;
        \draw (0,0) -- (2,0);       	
        \draw (2,2) .. controls (5/4,2) and (5/4,1) .. (2,1);       	
        \draw (0,2) .. controls (3/4,2) and (3/4,3) .. (0,3);       	
        \draw (0,1) .. controls (3/4,1) and (5/4,3) .. (2,3);
    \end{tikzpicture}\ , \quad 
    \begin{tikzpicture}[baseline={(current bounding box.center)},scale=\TLdiagscaleLangloisRemillard]
       \draw [line width=0.3mm] (0,-0.7) -- (0,3+.7) ;
       \draw [line width=0.3mm] (2,-0.7) -- (2,3+.7) ;
        \draw (2,0) .. controls (5/4,0) and (5/4,1) .. (2,1);       	
        \draw (0,2) .. controls (3/4,2) and (3/4,3) .. (0,3);       	
        \draw (0,0) .. controls (3/4,0) and (5/4,2) .. (2,2);
        \draw (0,1) .. controls (3/4,1) and (5/4,3) .. (2,3);
    \end{tikzpicture}\ , \notag\\
    \begin{tikzpicture}[baseline={(current bounding box.center)},scale=\TLdiagscaleLangloisRemillard]
    \draw [line width=0.3mm] (0,-0.7) -- (0,3+.7) ;
    \draw [line width=0.3mm] (2,-0.7) -- (2,3+.7) ;
    \draw (0,0) -- (2,0);       	
    \draw (0,3) -- (2,3);
    \draw (0,2) .. controls (3/4,2) and (3/4,1) .. (0,1);       	
    \draw (2,2) .. controls (5/4,2) and (5/4,1) .. (2,1);
    \end{tikzpicture}\ , \quad 
    \begin{tikzpicture}[baseline={(current bounding box.center)},scale=\TLdiagscaleLangloisRemillard]
       \draw [line width=0.3mm] (0,-0.7) -- (0,3+.7) ;
       \draw [line width=0.3mm] (2,-0.7) -- (2,3+.7) ;
        \draw (0,0) -- (2,0);       	
        \draw (0,3) .. controls (3/4,3) and (5/4,1) .. (2,1);
        \draw (0,2) .. controls (3/4,2) and (3/4,1) .. (0,1);       	
        \draw (2,2) .. controls (5/4,2) and (5/4,3) .. (2,3);
    \end{tikzpicture}\ , \quad 
    \begin{tikzpicture}[baseline={(current bounding box.center)},scale=\TLdiagscaleLangloisRemillard]
       \draw [line width=0.3mm] (0,-0.7) -- (0,3+.7) ;
       \draw [line width=0.3mm] (2,-0.7) -- (2,3+.7) ;
        \draw (0,3) -- (2,3);
        \draw (0,0) .. controls (3/4,0) and (5/4,2) .. (2,2);
        \draw (0,2) .. controls (3/4,2) and (3/4,1) .. (0,1);       	
        \draw (2,1) .. controls (5/4,1) and (5/4,0) .. (2,0);
    \end{tikzpicture}\ , \\ 
    \begin{tikzpicture}[baseline={(current bounding box.center)},scale=\TLdiagscaleLangloisRemillard]
    \draw [line width=0.3mm] (0,-0.7) -- (0,3+.7) ;
    \draw [line width=0.3mm] (2,-0.7) -- (2,3+.7) ;
    \draw (0,3) -- (2,3);
    \draw (0,2) -- (2,2);       	
    \draw (0,1) .. controls (3/4,1) and (3/4,0) .. (0,0);       	
    \draw (2,1) .. controls (5/4,1) and (5/4,0) .. (2,0);
    \end{tikzpicture}\ , \quad 
    \begin{tikzpicture}[baseline={(current bounding box.center)},scale=\TLdiagscaleLangloisRemillard]
       \draw [line width=0.3mm] (0,-0.7) -- (0,3+.7) ;
       \draw [line width=0.3mm] (2,-0.7) -- (2,3+.7) ;
        \draw (0,3) -- (2,3);
        \draw (0,2) .. controls (3/4,2) and (5/4,0) .. (2,0);       	
        \draw (0,1) .. controls (3/4,1) and (3/4,0) .. (0,0);       	
        \draw (2,2) .. controls (5/4,2) and (5/4,1) .. (2,1);
    \end{tikzpicture}\ , \quad 
     \begin{tikzpicture}[baseline={(current bounding box.center)},scale=\TLdiagscaleLangloisRemillard]
     \draw [line width=0.3mm] (0,-0.7) -- (0,3+.7) ;
     \draw [line width=0.3mm] (2,-0.7) -- (2,3+.7) ;
     \draw (0,3) .. controls (3/4,3) and (5/4,1) .. (2,1);
     \draw (0,2) .. controls (3/4,2) and (5/4,0) .. (2,0);       	
     \draw (0,1) .. controls (3/4,1) and (3/4,0) .. (0,0);       	
     \draw (2,2) .. controls (5/4,2) and (5/4,3) .. (2,3);
     \end{tikzpicture}\ , \notag \\
\  \begin{tikzpicture}[baseline={(current bounding box.center)},scale=\TLdiagscaleLangloisRemillard]
       \draw [line width=0.3mm] (0,-0.7) -- (0,3+.7) ;
       \draw [line width=0.3mm] (2,-0.7) -- (2,3+.7) ;
        \draw (0,0) .. controls (3/4,0) and (3/4,1) .. (0,1);       	
        \draw (0,2) .. controls (3/4,2) and (3/4,3) .. (0,3);       	
        \draw (2,0) .. controls (5/4,0) and (5/4,1) .. (2,1);
        \draw (2,2) .. controls (5/4,2) and (5/4,3) .. (2,3);
    \end{tikzpicture}\ , \quad 
\begin{tikzpicture}[baseline={(current bounding box.center)},scale=\TLdiagscaleLangloisRemillard]
       \draw [line width=0.3mm] (0,-0.7) -- (0,3+.7) ;
       \draw [line width=0.3mm] (2,-0.7) -- (2,3+.7) ;
        \draw (0,0) .. controls (3/4,0) and (3/4,1) .. (0,1);       	
        \draw (0,2) .. controls (3/4,2) and (3/4,3) .. (0,3);       	
        \draw (2,1) .. controls (1.25,1) and (1.25,2) .. (2,2);
       	\draw (2,0) .. controls (1,0) and (1,3) .. (2,3);
    \end{tikzpicture}\ , \quad
\begin{tikzpicture}[baseline={(current bounding box.center)},scale=\TLdiagscaleLangloisRemillard]
       \draw [line width=0.3mm] (0,-0.7) -- (0,3+.7) ;
       \draw [line width=0.3mm] (2,-0.7) -- (2,3+.7) ;
       	\draw (0,0) .. controls (1,0) and (1,3) .. (0,3);
        \draw (0,1) .. controls (3/4,1) and (3/4,2) .. (0,2);
        \draw (2,0) .. controls (5/4,0) and (5/4,1) .. (2,1);
        \draw (2,2) .. controls (5/4,2) and (5/4,3) .. (2,3);
    \end{tikzpicture}\ , \quad
    \begin{tikzpicture}[baseline={(current bounding box.center)},scale=\TLdiagscaleLangloisRemillard]
       \draw [line width=0.3mm] (0,-0.7) -- (0,3+.7) ;
       \draw [line width=0.3mm] (2,-0.7) -- (2,3+.7) ;
       	\draw (0,0) .. controls (1,0) and (1,3) .. (0,3);
        \draw (0,1) .. controls (3/4,1) and (3/4,2) .. (0,2);
        \draw (2,1) .. controls (1.25,1) and (1.25,2) .. (2,2);
       	\draw (2,0) .. controls (1,0) and (1,3) .. (2,3);
    \end{tikzpicture}\ .\notag
\end{gather} 

The identification between the two definitions follows from the morphism defined by
\begin{align}
\mathrm{id} &\longmapsto \begin{tikzpicture}[baseline={(current bounding box.center)},scale=\TLdiagscaleLangloisRemillard]
       \draw [line width=0.3mm] (0,-0.7) -- (0,5+.7) ;
       \draw [line width=0.3mm] (2,-0.7) -- (2,5+.7) ;
        \draw (0,0) -- (2,0);       	
        \draw (1,1.3) node{ \vdots};
        \draw (0,2) -- (2,2);       	
        \draw (0,3) -- (2,3);
			\node at (1,4.3) {\vdots};
			\draw (0,5) -- (2,5);       	
    \end{tikzpicture}\ , &  e_i &\longmapsto \begin{tikzpicture}[baseline={(current bounding box.center)},scale=\TLdiagscaleLangloisRemillard]
       	\draw [line width=0.3mm] (0,-0.7) -- (0,7 +.7) ;
       	\draw [line width=0.3mm] (2,-0.7) -- (2,7 +.7) ;
        	\draw (0,0) -- (2,0);       	
			\node at (1,1.3) {\vdots};
        	\draw (0,2.25) -- (2,2.25);       	
			\draw (0,4) .. controls (3/4,4) and (3/4,3) .. (0,3);       	
       	\draw (2,4) .. controls (5/4,4) and (5/4,3) .. (2,3);      	
        	\draw (0,4.75) -- (2,4.75);
			\node at (1,6.2) {\vdots};
			\draw (0,7) -- (2,7);      
			\node  at (-0.75,4.1) {$i$};
			\node  at (-1.35,3) {$i+1$};
    \end{tikzpicture}\ .
\end{align}

It is easy to see that the diagrammatic algebra respect the  relations. For example here is the verification of $e_1^2 = (q+q^{-1})e_1$, $e_1e_3=e_3e_1$ and $e_2e_3e_2=e_2$  in $\mathsf{TL}_{4}(q+q^{-1})$:
\begin{gather*}
e_1e_1 \longmapsto\
\begin{tikzpicture}[baseline={(current bounding box.center)},scale=\TLdiagscaleLangloisRemillard]
       \draw [line width=0.3mm] (0,-0.7) -- (0,3+.7) ;
       \draw [line width=0.3mm] (2,-0.7) -- (2,3+.7) ;
        \draw (0,0) -- (2,0);       	
        \draw (0,1) -- (2,1);       	
        \draw (0,2) .. controls (3/4,2) and (3/4,3) .. (0,3);       	
        \draw (2,2) .. controls (5/4,2) and (5/4,3) .. (2,3);
    \end{tikzpicture}\begin{tikzpicture}[baseline={(current bounding box.center)},scale=\TLdiagscaleLangloisRemillard]
       \draw [line width=0.3mm] (0,-0.7) -- (0,3+.7) ;
       \draw [line width=0.3mm] (2,-0.7) -- (2,3+.7) ;
        \draw (0,0) -- (2,0);       	
        \draw (0,1) -- (2,1);       	
        \draw (0,2) .. controls (3/4,2) and (3/4,3) .. (0,3);       	
        \draw (2,2) .. controls (5/4,2) and (5/4,3) .. (2,3);
    \end{tikzpicture}\ = \ (q+q^{-1})\; \begin{tikzpicture}[baseline={(current bounding box.center)},scale=\TLdiagscaleLangloisRemillard]
       \draw [line width=0.3mm] (0,-0.7) -- (0,3+.7) ;
       \draw [line width=0.3mm] (2,-0.7) -- (2,3+.7) ;
        \draw (0,0) -- (2,0);       	
        \draw (0,1) -- (2,1);       	
        \draw (0,2) .. controls (3/4,2) and (3/4,3) .. (0,3);       	
        \draw (2,2) .. controls (5/4,2) and (5/4,3) .. (2,3);
    \end{tikzpicture}\ , \quad 
e_1e_3\longmapsto \
\begin{tikzpicture}[baseline={(current bounding box.center)},scale=\TLdiagscaleLangloisRemillard]
       \draw [line width=0.3mm] (0,-0.7) -- (0,3+.7) ;
       \draw [line width=0.3mm] (2,-0.7) -- (2,3+.7) ;
        \draw (0,0) -- (2,0);       	
        \draw (0,1) -- (2,1);       	
        \draw (0,2) .. controls (3/4,2) and (3/4,3) .. (0,3);       	
        \draw (2,2) .. controls (5/4,2) and (5/4,3) .. (2,3);
    \end{tikzpicture}    \begin{tikzpicture}[baseline={(current bounding box.center)},scale=\TLdiagscaleLangloisRemillard]
       \draw [line width=0.3mm] (0,-0.7) -- (0,3+.7) ;
       \draw [line width=0.3mm] (2,-0.7) -- (2,3+.7) ;
        \draw (0,3) -- (2,3);
        \draw (0,2) -- (2,2);       	
        \draw (0,1) .. controls (3/4,1) and (3/4,0) .. (0,0);       	
        \draw (2,1) .. controls (5/4,1) and (5/4,0) .. (2,0);
    \end{tikzpicture}
\ = \ 
    \begin{tikzpicture}[baseline={(current bounding box.center)},scale=\TLdiagscaleLangloisRemillard]
       \draw [line width=0.3mm] (0,-0.7) -- (0,3+.7) ;
       \draw [line width=0.3mm] (2,-0.7) -- (2,3+.7) ;
        \draw (0,3) -- (2,3);
        \draw (0,2) -- (2,2);       	
        \draw (0,1) .. controls (3/4,1) and (3/4,0) .. (0,0);       	
        \draw (2,1) .. controls (5/4,1) and (5/4,0) .. (2,0);
    \end{tikzpicture}\begin{tikzpicture}[baseline={(current bounding box.center)},scale=\TLdiagscaleLangloisRemillard]
       \draw [line width=0.3mm] (0,-0.7) -- (0,3+.7) ;
       \draw [line width=0.3mm] (2,-0.7) -- (2,3+.7) ;
        \draw (0,0) -- (2,0);       	
        \draw (0,1) -- (2,1);       	
        \draw (0,2) .. controls (3/4,2) and (3/4,3) .. (0,3);       	
        \draw (2,2) .. controls (5/4,2) and (5/4,3) .. (2,3);
    \end{tikzpicture}\ ,\\
    e_2e_3e_2\longmapsto \begin{tikzpicture}[baseline={(current bounding box.center)},scale=\TLdiagscaleLangloisRemillard]
       \draw [line width=0.3mm] (0,-0.7) -- (0,3+.7) ;
       \draw [line width=0.3mm] (2,-0.7) -- (2,3+.7) ;
        \draw (0,0) -- (2,0);       	
        \draw (0,3) -- (2,3);
        \draw (0,2) .. controls (3/4,2) and (3/4,1) .. (0,1);       	
        \draw (2,2) .. controls (5/4,2) and (5/4,1) .. (2,1);
    \end{tikzpicture}\begin{tikzpicture}[baseline={(current bounding box.center)},scale=\TLdiagscaleLangloisRemillard]
       \draw [line width=0.3mm] (0,-0.7) -- (0,3+.7) ;
       \draw [line width=0.3mm] (2,-0.7) -- (2,3+.7) ;
        \draw (0,3) -- (2,3);
        \draw (0,2) -- (2,2);       	
        \draw (0,1) .. controls (3/4,1) and (3/4,0) .. (0,0);       	
        \draw (2,1) .. controls (5/4,1) and (5/4,0) .. (2,0);
    \end{tikzpicture}\begin{tikzpicture}[baseline={(current bounding box.center)},scale=\TLdiagscaleLangloisRemillard]
       \draw [line width=0.3mm] (0,-0.7) -- (0,3+.7) ;
       \draw [line width=0.3mm] (2,-0.7) -- (2,3+.7) ;
        \draw (0,0) -- (2,0);       	
        \draw (0,3) -- (2,3);
        \draw (0,2) .. controls (3/4,2) and (3/4,1) .. (0,1);       	
        \draw (2,2) .. controls (5/4,2) and (5/4,1) .. (2,1);
    \end{tikzpicture} \ = \ \begin{tikzpicture}[baseline={(current bounding box.center)},scale=\TLdiagscaleLangloisRemillard]
       \draw [line width=0.3mm] (0,-0.7) -- (0,3+.7) ;
       \draw [line width=0.3mm] (2,-0.7) -- (2,3+.7) ;
        \draw (0,0) -- (2,0);       	
        \draw (0,3) -- (2,3);
        \draw (0,2) .. controls (3/4,2) and (3/4,1) .. (0,1);       	
        \draw (2,2) .. controls (5/4,2) and (5/4,1) .. (2,1);
    \end{tikzpicture}\ .
\end{gather*}

The advantage of this presentation is readily shown when exhibiting a cellular basis. For $\mathsf{TL}_4(q+q^{-1})$ take $\Lambda := \{0,1,2\}$, the number of arcs on the same side. For $d\in \Lambda$, let $M(d)$ be the set of left half-diagram with $d$ arcs on the same side; the map $C$ simply combines two half-diagrams with the same amounts of arcs in the only way possible after flipping the second one. For example
\begin{equation*}
\begin{tikzpicture}[baseline={(current bounding box.center)},scale=\TLdiagscaleLangloisRemillard]
\draw [line width=0.3mm] (0,-0.7) -- (0,3+.7) ;
\draw (0,0) .. controls (3/4,0) and (3/4,1) .. (0,1);       	
\draw (0,2) -- (1,2);       	
\draw (0,3) -- (1,3);
\end{tikzpicture}\ , \   \begin{tikzpicture}[baseline={(current bounding box.center)},scale=\TLdiagscaleLangloisRemillard]
\draw [line width=0.3mm] (0,-0.7) -- (0,3+.7) ;     	
\draw (0,2) .. controls (3/4,2) and (3/4,3) .. (0,3);       	
\draw (0,0) -- (1,0);
\draw (0,1) --(1,1);
\end{tikzpicture}\quad \longmapsto \quad \begin{tikzpicture}[baseline={(current bounding box.center)},scale=\TLdiagscaleLangloisRemillard]
\draw [line width=0.3mm] (0,-0.7) -- (0,3+.7) ;
\draw (0,0) .. controls (3/4,0) and (3/4,1) .. (0,1);       	
\draw (0,2) -- (1,2);       	
\draw (0,3) -- (1,3);
\end{tikzpicture}\ \dots  \begin{tikzpicture}[baseline={(current bounding box.center)},scale=\TLdiagscaleLangloisRemillard]
\draw [line width=0.3mm] (0,-0.7) -- (0,3+.7) ;     	
\draw (0,2) .. controls (-3/4,2) and (-3/4,3) .. (0,3);       	
\draw (0,0) -- (-1,0);
\draw (0,1) --(-1,1);
\end{tikzpicture} \quad = \quad \begin{tikzpicture}[baseline={(current bounding box.center)},scale=\TLdiagscaleLangloisRemillard]
\draw [line width=0.3mm] (0,-0.7) -- (0,3+.7) ;
\draw [line width=0.3mm] (2,-0.7) -- (2,3+.7) ;
\draw (0,3) .. controls (3/4,3) and (5/4,1) .. (2,1);
\draw (0,2) .. controls (3/4,2) and (5/4,0) .. (2,0);       	
\draw (0,1) .. controls (3/4,1) and (3/4,0) .. (0,0);       	
\draw (2,2) .. controls (5/4,2) and (5/4,3) .. (2,3);
\end{tikzpicture}\ .
\end{equation*}
The anti-involution $\iota$ is simply the reflection of diagrams. It can also be defined as the only anti-endomorphism that leaves invariant the generators of the algebra. For example
\begin{equation*}
\iota\left(\begin{tikzpicture}[baseline={(current bounding box.center)},scale=\TLdiagscaleLangloisRemillard]
\draw [line width=0.3mm] (0,-0.7) -- (0,3+.7) ;
\draw [line width=0.3mm] (2,-0.7) -- (2,3+.7) ;
\draw (0,3) .. controls (3/4,3) and (5/4,1) .. (2,1);
\draw (0,2) .. controls (3/4,2) and (5/4,0) .. (2,0);       	
\draw (0,1) .. controls (3/4,1) and (3/4,0) .. (0,0);       	
\draw (2,2) .. controls (5/4,2) and (5/4,3) .. (2,3);
\end{tikzpicture} \right) \ = \  \begin{tikzpicture}[baseline={(current bounding box.center)},scale=\TLdiagscaleLangloisRemillard]
\draw [line width=0.3mm] (0,-0.7) -- (0,3+.7) ;
\draw [line width=0.3mm] (2,-0.7) -- (2,3+.7) ;
\draw (2,0) .. controls (5/4,0) and (5/4,1) .. (2,1);       	
\draw (0,2) .. controls (3/4,2) and (3/4,3) .. (0,3);       	
\draw (0,0) .. controls (3/4,0) and (5/4,2) .. (2,2);
\draw (0,1) .. controls (3/4,1) and (5/4,3) .. (2,3);
\end{tikzpicture}\ .
\end{equation*}

Constructing the diagrams from half-diagrams is an injective process and all possible cases are covered as $\Lambda$ contains all the possible number of arcs, thus the image of $C$ is a basis of $\mathsf{TL}_4(q+q^{-1})$. Axiom~\eqref{eq:axiomcellularityinvolutionLangloisRemillard} is satisfied as flipping one diagram will indeed simply switch the place of the two half-diagrams, and axiom~\eqref{eq:axiomcellularityLangloisRemillard} amounts to the statement: ``arcs can only be created, never destroyed.''

There are three cell modules for $\mathsf{TL}_4(q+q^{-1})$: $\mathsf{C}_0$, $\mathsf C_1$ and $\mathsf{C}_2$ with respective basis given by:
\begin{equation*}
\begin{aligned}
\mathfrak{B}_0 &= \left\{ \begin{tikzpicture}[baseline={(current bounding box.center)},scale=\TLdiagscaleLangloisRemillard]
\draw [line width=0.3mm] (0,-0.7) -- (0,3+.7) ;
\draw (0,0) -- (1,0);       	
\draw (0,1) -- (1,1);       	
\draw (0,2) -- (1,2);       	
\draw (0,3) -- (1,3);       	
\end{tikzpicture}\right\}, & \mathfrak{B}_1 &= \left\{\begin{tikzpicture}[baseline={(current bounding box.center)},scale=\TLdiagscaleLangloisRemillard]
\draw [line width=0.3mm] (0,-0.7) -- (0,3+.7) ;     	
\draw (0,2) .. controls (3/4,2) and (3/4,3) .. (0,3);       	
\draw (0,0) -- (1,0);
\draw (0,1) --(1,1);
\end{tikzpicture} \ , \   
\begin{tikzpicture}[baseline={(current bounding box.center)},scale=\TLdiagscaleLangloisRemillard]
\draw [line width=0.3mm] (0,-0.7) -- (0,3+.7) ;     	
\draw (0,2) .. controls (3/4,2) and (3/4,1) .. (0,1);       	
\draw (0,0) -- (1,0);
\draw (0,3) --(1,3);
\end{tikzpicture}\ , \ \begin{tikzpicture}[baseline={(current bounding box.center)},scale=\TLdiagscaleLangloisRemillard]
\draw [line width=0.3mm] (0,-0.7) -- (0,3+.7) ;
\draw (0,0) .. controls (3/4,0) and (3/4,1) .. (0,1);       	
\draw (0,2) -- (1,2);       	
\draw (0,3) -- (1,3);
\end{tikzpicture}  \right\}, & \mathfrak{B}_2 &= \left\{ \begin{tikzpicture}[baseline={(current bounding box.center)},scale=\TLdiagscaleLangloisRemillard]
\draw [line width=0.3mm] (0,-0.7) -- (0,3+.7) ;     	
\draw (0,2) .. controls (3/4,2) and (3/4,3) .. (0,3);       	
\draw (0,0) .. controls (3/4,0) and (3/4,1) .. (0,1);
\end{tikzpicture}\ , \ \begin{tikzpicture}[baseline={(current bounding box.center)},scale=\TLdiagscaleLangloisRemillard]
\draw [line width=0.3mm] (0,-0.7) -- (0,3+.7) ;     	
\draw (0,2) .. controls (3/4,2) and (3/4,1) .. (0,1);       	
\draw (0,0) .. controls (1,0) and (1,3) .. (0,3);
\end{tikzpicture}  \right\}.
\end{aligned}
\end{equation*} 

The action is also given by concatenation with the extra rules that whenever a new arc is created, the result is zero. For example
\begin{equation}
\begin{tikzpicture}[baseline={(current bounding box.center)},scale=\TLdiagscaleLangloisRemillard]
\draw [line width=0.3mm] (0,-0.7) -- (0,3+.7) ;
\draw [line width=0.3mm] (2,-0.7) -- (2,3+.7) ;
\draw (2,0) .. controls (5/4,0) and (5/4,1) .. (2,1);       	
\draw (0,2) .. controls (3/4,2) and (3/4,3) .. (0,3);       	
\draw (0,0) .. controls (3/4,0) and (5/4,2) .. (2,2);
\draw (0,1) .. controls (3/4,1) and (5/4,3) .. (2,3);
\end{tikzpicture}\begin{tikzpicture}[baseline={(current bounding box.center)},scale=\TLdiagscaleLangloisRemillard]
\draw [line width=0.3mm] (0,-0.7) -- (0,3+.7) ;     	
\draw (0,2) .. controls (3/4,2) and (3/4,3) .. (0,3);       	
\draw (0,0) -- (1,0);
\draw (0,1) --(1,1);
\end{tikzpicture}  = 0, \qquad \begin{tikzpicture}[baseline={(current bounding box.center)},scale=\TLdiagscaleLangloisRemillard]
\draw [line width=0.3mm] (0,-0.7) -- (0,3+.7) ;
\draw [line width=0.3mm] (2,-0.7) -- (2,3+.7) ;
\draw (2,0) .. controls (5/4,0) and (5/4,1) .. (2,1);       	
\draw (0,2) .. controls (3/4,2) and (3/4,3) .. (0,3);       	
\draw (0,0) .. controls (3/4,0) and (5/4,2) .. (2,2);
\draw (0,1) .. controls (3/4,1) and (5/4,3) .. (2,3);
\end{tikzpicture}\begin{tikzpicture}[baseline={(current bounding box.center)},scale=\TLdiagscaleLangloisRemillard]
\draw [line width=0.3mm] (0,-0.7) -- (0,3+.7) ;
\draw (0,0) .. controls (3/4,0) and (3/4,1) .. (0,1);       	
\draw (0,2) -- (1,2);       	
\draw (0,3) -- (1,3);
\end{tikzpicture}\ = (q+q^{-1})\ \begin{tikzpicture}[baseline={(current bounding box.center)},scale=\TLdiagscaleLangloisRemillard]
\draw [line width=0.3mm] (0,-0.7) -- (0,3+.7) ;
\draw (0,0) .. controls (3/4,0) and (3/4,1) .. (0,1);       	
\draw (0,2) -- (1,2);       	
\draw (0,3) -- (1,3);
\end{tikzpicture}\ .
\end{equation}

When $q$ is not a root of unity, the Temperley-Lieb algebra is semisimple and decomposes as a module on itself by the Wedderburn theorem in the direct sum:
\begin{equation}
\mathsf{TL}_4(q+q^{-1}) = \bigoplus_{d\in \Lambda} \dim( \mathsf{C}_d) \mathsf{C}_d.
\end{equation}

After applying proposition~\ref{prop:YautwisttypeIILangloisRemillard}, the new multiplication of the Hom-associative algebra of type II $(\mathsf{TL}_4(q+q^{-1}),\circledast,\iota)$ simply flips the result of the old. For example
\begin{equation*}
\begin{tikzpicture}[baseline={(current bounding box.center)},scale=\TLdiagscaleLangloisRemillard]
\draw [line width=0.3mm] (0,-0.7) -- (0,3+.7) ;
\draw [line width=0.3mm] (2,-0.7) -- (2,3+.7) ;
\draw (0,0) -- (2,0);       	
\draw (0,1) -- (2,1);       	
\draw (0,2) .. controls (3/4,2) and (3/4,3) .. (0,3);       	
\draw (2,2) .. controls (5/4,2) and (5/4,3) .. (2,3);
\end{tikzpicture} \ \circledast \ \begin{tikzpicture}[baseline={(current bounding box.center)},scale=\TLdiagscaleLangloisRemillard]
\draw [line width=0.3mm] (0,-0.7) -- (0,3+.7) ;
\draw [line width=0.3mm] (2,-0.7) -- (2,3+.7) ;
\draw (0,0) -- (2,0);       	
\draw (0,3) -- (2,3);       	
\draw (0,2) .. controls (3/4,2) and (3/4,1) .. (0,1);       	
\draw (2,2) .. controls (5/4,2) and (5/4,1) .. (2,1);
\end{tikzpicture} \ = \ \iota \left(\, \begin{tikzpicture}[baseline={(current bounding box.center)},scale=\TLdiagscaleLangloisRemillard]
\draw [line width=0.3mm] (0,-0.7) -- (0,3+.7) ;
\draw [line width=0.3mm] (2,-0.7) -- (2,3+.7) ;
\draw (0,0) -- (2,0);       	
\draw (0,1) .. controls (3/4,1) and (5/4,3) .. (2,3);
\draw (0,2) .. controls (3/4,2) and (3/4,3) .. (0,3);       	
\draw (2,2) .. controls (5/4,2) and (5/4,1) .. (2,1);
\end{tikzpicture} \, \right) \ = \ \begin{tikzpicture}[baseline={(current bounding box.center)},scale=\TLdiagscaleLangloisRemillard]
\draw [line width=0.3mm] (0,-0.7) -- (0,3+.7) ;
\draw [line width=0.3mm] (2,-0.7) -- (2,3+.7) ;
\draw (0,0) -- (2,0);       	
\draw (0,3) .. controls (3/4,3) and (5/4,1) .. (2,1);
\draw (0,2) .. controls (3/4,2) and (3/4,1) .. (0,1);       	
\draw (2,2) .. controls (5/4,2) and (5/4,3) .. (2,3);
\end{tikzpicture}\ .
\end{equation*}

Interestingly, the action on module changes in a very natural way in this diagrammatic setting. Sending the left module $\mathsf{C}$ to a right Hom-module by $m\cdot a := \iota(a) m$ is portrayed in diagrammatic form simply by flipping the orientation of the half-diagram and keeping the natural action by concatenation.

The new bases of the right cell modules $\mathsf{C}_0$, $\mathsf{C}_1$ and $\mathsf{C}_2$ (with $\alpha_{\mathsf{C}} = \mathrm{id}_{\mathsf C}$) of the Hom-associative algebra of type II $(\mathsf{TL}_4(q+q^{-1}), \circledast,\iota)$ are given by
\begin{equation*}
\begin{aligned}
\mathfrak{B}'_0 &= \left\{ \begin{tikzpicture}[baseline={(current bounding box.center)},scale=\TLdiagscaleLangloisRemillard]
\draw [line width=0.3mm] (0,-0.7) -- (0,3+.7) ;
\draw (0,0) -- (-1,0);       	
\draw (0,1) -- (-1,1);       	
\draw (0,2) -- (-1,2);       	
\draw (0,3) -- (-1,3);       	
\end{tikzpicture}\right\}, & 
\mathfrak{B}_1' &= \left\{\begin{tikzpicture}[baseline={(current bounding box.center)},scale=\TLdiagscaleLangloisRemillard]
\draw [line width=0.3mm] (0,-0.7) -- (0,3+.7) ;     	
\draw (0,2) .. controls (-3/4,2) and (-3/4,3) .. (0,3);       	
\draw (0,0) -- (-1,0);
\draw (0,1) --(-1,1);
\end{tikzpicture} \ , \   
\begin{tikzpicture}[baseline={(current bounding box.center)},scale=\TLdiagscaleLangloisRemillard]
\draw [line width=0.3mm] (0,-0.7) -- (0,3+.7) ;     	
\draw (0,2) .. controls (-3/4,2) and (-3/4,1) .. (0,1);       	
\draw (0,0) -- (-1,0);
\draw (0,3) --(-1,3);
\end{tikzpicture}\ , \ \begin{tikzpicture}[baseline={(current bounding box.center)},scale=\TLdiagscaleLangloisRemillard]
\draw [line width=0.3mm] (0,-0.7) -- (0,3+.7) ;
\draw (0,0) .. controls (-3/4,0) and (-3/4,1) .. (0,1);       	
\draw (0,2) -- (-1,2);       	
\draw (0,3) -- (-1,3);
\end{tikzpicture}  \right\}, & 
\mathfrak{B}_2' &= \left\{ \begin{tikzpicture}[baseline={(current bounding box.center)},scale=\TLdiagscaleLangloisRemillard]
\draw [line width=0.3mm] (0,-0.7) -- (0,3+.7) ;     	
\draw (0,2) .. controls (-3/4,2) and (-3/4,3) .. (0,3);       	
\draw (0,0) .. controls (-3/4,0) and (-3/4,1) .. (0,1);
\end{tikzpicture}\ , \ \begin{tikzpicture}[baseline={(current bounding box.center)},scale=\TLdiagscaleLangloisRemillard]
\draw [line width=0.3mm] (0,-0.7) -- (0,3+.7) ;     	
\draw (0,2) .. controls (-3/4,2) and (-3/4,1) .. (0,1);       	
\draw (0,0) .. controls (-1,0) and (-1,3) .. (0,3);
\end{tikzpicture}  \right\}.
\end{aligned}
\end{equation*} 

The action is given simply by concatenation diagrammatically, which amounts formally to the functor $F$ of proposition~\ref{prop:equivalenceofmodulescategoriesLangloisRemillard}. For example:
\begin{gather*}
\begin{tikzpicture}[baseline={(current bounding box.center)},scale=\TLdiagscaleLangloisRemillard]
\draw [line width=0.3mm] (0,-0.7) -- (0,3+.7) ;     	
\draw (0,2) .. controls (-3/4,2) and (-3/4,3) .. (0,3);       	
\draw (0,0) -- (-1,0);
\draw (0,1) --(-1,1);
\end{tikzpicture}\begin{tikzpicture}[baseline={(current bounding box.center)},scale=\TLdiagscaleLangloisRemillard]
\draw [line width=0.3mm] (0,-0.7) -- (0,3+.7) ;
\draw [line width=0.3mm] (2,-0.7) -- (2,3+.7) ;
\draw (0,0) -- (2,0);       	
\draw (0,3) .. controls (3/4,3) and (5/4,1) .. (2,1);
\draw (0,2) .. controls (3/4,2) and (3/4,1) .. (0,1);       	
\draw (2,2) .. controls (5/4,2) and (5/4,3) .. (2,3);
\end{tikzpicture} \ = \ \begin{tikzpicture}[baseline={(current bounding box.center)},scale=\TLdiagscaleLangloisRemillard]
\draw [line width=0.3mm] (0,-0.7) -- (0,3+.7) ;     	
\draw (0,2) .. controls (-3/4,2) and (-3/4,3) .. (0,3);       	
\draw (0,0) -- (-1,0);
\draw (0,1) --(-1,1);
\end{tikzpicture}
\intertext{and formally by,}
\begin{tikzpicture}[baseline={(current bounding box.center)},scale=\TLdiagscaleLangloisRemillard]
\draw [line width=0.3mm] (0,-0.7) -- (0,3+.7) ;     	
\draw (0,2) .. controls (3/4,2) and (3/4,3) .. (0,3);       	
\draw (0,0) -- (1,0);
\draw (0,1) --(1,1);
\end{tikzpicture}\ \cdot \ \begin{tikzpicture}[baseline={(current bounding box.center)},scale=\TLdiagscaleLangloisRemillard]
\draw [line width=0.3mm] (0,-0.7) -- (0,3+.7) ;
\draw [line width=0.3mm] (2,-0.7) -- (2,3+.7) ;
\draw (0,0) -- (2,0);       	
\draw (0,3) .. controls (3/4,3) and (5/4,1) .. (2,1);
\draw (0,2) .. controls (3/4,2) and (3/4,1) .. (0,1);       	
\draw (2,2) .. controls (5/4,2) and (5/4,3) .. (2,3);
\end{tikzpicture} \ := \ 
\iota\left(\, \begin{tikzpicture}[baseline={(current bounding box.center)},scale=\TLdiagscaleLangloisRemillard]
\draw [line width=0.3mm] (0,-0.7) -- (0,3+.7) ;
\draw [line width=0.3mm] (2,-0.7) -- (2,3+.7) ;
\draw (0,0) -- (2,0);       	
\draw (0,3) .. controls (3/4,3) and (5/4,1) .. (2,1);
\draw (0,2) .. controls (3/4,2) and (3/4,1) .. (0,1);       	
\draw (2,2) .. controls (5/4,2) and (5/4,3) .. (2,3);
\end{tikzpicture}\,\right)\begin{tikzpicture}[baseline={(current bounding box.center)},scale=\TLdiagscaleLangloisRemillard]
\draw [line width=0.3mm] (0,-0.7) -- (0,3+.7) ;     	
\draw (0,2) .. controls (3/4,2) and (3/4,3) .. (0,3);       	
\draw (0,0) -- (1,0);
\draw (0,1) --(1,1);
\end{tikzpicture}\ = \  \begin{tikzpicture}[baseline={(current bounding box.center)},scale=\TLdiagscaleLangloisRemillard]
\draw [line width=0.3mm] (0,-0.7) -- (0,3+.7) ;
\draw [line width=0.3mm] (2,-0.7) -- (2,3+.7) ;
\draw (0,0) -- (2,0);       	
\draw (0,1) .. controls (3/4,1) and (5/4,3) .. (2,3);
\draw (0,2) .. controls (3/4,2) and (3/4,3) .. (0,3);       	
\draw (2,2) .. controls (5/4,2) and (5/4,1) .. (2,1);
\end{tikzpicture}
\begin{tikzpicture}[baseline={(current bounding box.center)},scale=\TLdiagscaleLangloisRemillard]
\draw [line width=0.3mm] (0,-0.7) -- (0,3+.7) ;     	
\draw (0,2) .. controls (3/4,2) and (3/4,3) .. (0,3);       	
\draw (0,0) -- (1,0);
\draw (0,1) --(1,1);
\end{tikzpicture} = \begin{tikzpicture}[baseline={(current bounding box.center)},scale=\TLdiagscaleLangloisRemillard]
\draw [line width=0.3mm] (0,-0.7) -- (0,3+.7) ;     	
\draw (0,2) .. controls (3/4,2) and (3/4,3) .. (0,3);       	
\draw (0,0) -- (1,0);
\draw (0,1) --(1,1);
\end{tikzpicture}\ .
\end{gather*}

Equation~\ref{eq:homoduleIILangloisRemillard} is also respected in this setting. For example the right-hand side is
\begin{align*}
\begin{tikzpicture}[baseline={(current bounding box.center)},scale=\TLdiagscaleLangloisRemillard]
\draw [line width=0.3mm] (0,-0.7) -- (0,3+.7) ;     	
\draw (0,2) .. controls (-3/4,2) and (-3/4,3) .. (0,3);       	
\draw (0,0) -- (-1,0);
\draw (0,1) --(-1,1);
\end{tikzpicture}\ \cdot \ \iota\left(\begin{tikzpicture}[baseline={(current bounding box.center)},scale=\TLdiagscaleLangloisRemillard]
\draw [line width=0.3mm] (0,-0.7) -- (0,3+.7) ;
\draw [line width=0.3mm] (2,-0.7) -- (2,3+.7) ;
\draw (0,0) -- (2,0);       	
\draw (0,1) -- (2,1);       	
\draw (0,2) .. controls (3/4,2) and (3/4,3) .. (0,3);       	
\draw (2,2) .. controls (5/4,2) and (5/4,3) .. (2,3);
\end{tikzpicture} \ \circledast \ \begin{tikzpicture}[baseline={(current bounding box.center)},scale=\TLdiagscaleLangloisRemillard]
\draw [line width=0.3mm] (0,-0.7) -- (0,3+.7) ;
\draw [line width=0.3mm] (2,-0.7) -- (2,3+.7) ;
\draw (0,0) -- (2,0);       	
\draw (0,3) -- (2,3);       	
\draw (0,2) .. controls (3/4,2) and (3/4,1) .. (0,1);       	
\draw (2,2) .. controls (5/4,2) and (5/4,1) .. (2,1);
\end{tikzpicture}\right) \ = \ 
\begin{tikzpicture}[baseline={(current bounding box.center)},scale=\TLdiagscaleLangloisRemillard]
\draw [line width=0.3mm] (0,-0.7) -- (0,3+.7) ;     	
\draw (0,2) .. controls (-3/4,2) and (-3/4,3) .. (0,3);       	
\draw (0,0) -- (-1,0);
\draw (0,1) --(-1,1);
\end{tikzpicture}\ \cdot \ \iota^2 \left(\, \begin{tikzpicture}[baseline={(current bounding box.center)},scale=\TLdiagscaleLangloisRemillard]
\draw [line width=0.3mm] (0,-0.7) -- (0,3+.7) ;
\draw [line width=0.3mm] (2,-0.7) -- (2,3+.7) ;
\draw (0,0) -- (2,0);       	
\draw (0,1) .. controls (3/4,1) and (5/4,3) .. (2,3);
\draw (0,2) .. controls (3/4,2) and (3/4,3) .. (0,3);       	
\draw (2,2) .. controls (5/4,2) and (5/4,1) .. (2,1);
\end{tikzpicture} \, \right) \ = \ 
\begin{tikzpicture}[baseline={(current bounding box.center)},scale=\TLdiagscaleLangloisRemillard]
\draw [line width=0.3mm] (0,-0.7) -- (0,3+.7) ;     	
\draw (0,2) .. controls (-3/4,2) and (-3/4,3) .. (0,3);       	
\draw (0,0) -- (-1,0);
\draw (0,1) --(-1,1);
\end{tikzpicture}\begin{tikzpicture}[baseline={(current bounding box.center)},scale=\TLdiagscaleLangloisRemillard]
\draw [line width=0.3mm] (0,-0.7) -- (0,3+.7) ;
\draw [line width=0.3mm] (2,-0.7) -- (2,3+.7) ;
\draw (0,0) -- (2,0);       	
\draw (0,1) .. controls (3/4,1) and (5/4,3) .. (2,3);
\draw (0,2) .. controls (3/4,2) and (3/4,3) .. (0,3);       	
\draw (2,2) .. controls (5/4,2) and (5/4,1) .. (2,1);
\end{tikzpicture}\ = \ (q+q^{-1})\ \begin{tikzpicture}[baseline={(current bounding box.center)},scale=\TLdiagscaleLangloisRemillard]
\draw [line width=0.3mm] (0,-0.7) -- (0,3+.7) ;     	
\draw (0,2) .. controls (-3/4,2) and (-3/4,1) .. (0,1);       	
\draw (0,0) -- (-1,0);
\draw (0,3) --(-1,3);
\end{tikzpicture}\ ,
\end{align*}
and the left-hand side is given by
\begin{align*}
\mathrm{id}_{\mathsf{C}_1}\left(\begin{tikzpicture}[baseline={(current bounding box.center)},scale=\TLdiagscaleLangloisRemillard]
\draw [line width=0.3mm] (0,-0.7) -- (0,3+.7) ;     	
\draw (0,2) .. controls (-3/4,2) and (-3/4,3) .. (0,3);       	
\draw (0,0) -- (-1,0);
\draw (0,1) --(-1,1);
\end{tikzpicture}\begin{tikzpicture}[baseline={(current bounding box.center)},scale=\TLdiagscaleLangloisRemillard]
\draw [line width=0.3mm] (0,-0.7) -- (0,3+.7) ;
\draw [line width=0.3mm] (2,-0.7) -- (2,3+.7) ;
\draw (0,0) -- (2,0);       	
\draw (0,1) -- (2,1);       	
\draw (0,2) .. controls (3/4,2) and (3/4,3) .. (0,3);       	
\draw (2,2) .. controls (5/4,2) and (5/4,3) .. (2,3);
\end{tikzpicture}\right) \ \cdot \ \begin{tikzpicture}[baseline={(current bounding box.center)},scale=\TLdiagscaleLangloisRemillard]
\draw [line width=0.3mm] (0,-0.7) -- (0,3+.7) ;
\draw [line width=0.3mm] (2,-0.7) -- (2,3+.7) ;
\draw (0,0) -- (2,0);       	
\draw (0,3) -- (2,3);       	
\draw (0,2) .. controls (3/4,2) and (3/4,1) .. (0,1);       	
\draw (2,2) .. controls (5/4,2) and (5/4,1) .. (2,1);
\end{tikzpicture} \ = \ (q+q^{-1})\ \begin{tikzpicture}[baseline={(current bounding box.center)},scale=\TLdiagscaleLangloisRemillard]
\draw [line width=0.3mm] (0,-0.7) -- (0,3+.7) ;     	
\draw (0,2) .. controls (-3/4,2) and (-3/4,3) .. (0,3);       	
\draw (0,0) -- (-1,0);
\draw (0,1) --(-1,1);
\end{tikzpicture}\begin{tikzpicture}[baseline={(current bounding box.center)},scale=\TLdiagscaleLangloisRemillard]
\draw [line width=0.3mm] (0,-0.7) -- (0,3+.7) ;
\draw [line width=0.3mm] (2,-0.7) -- (2,3+.7) ;
\draw (0,0) -- (2,0);       	
\draw (0,3) -- (2,3);       	
\draw (0,2) .. controls (3/4,2) and (3/4,1) .. (0,1);       	
\draw (2,2) .. controls (5/4,2) and (5/4,1) .. (2,1);
\end{tikzpicture} \ = \ 
(q+q^{-1})\ \begin{tikzpicture}[baseline={(current bounding box.center)},scale=\TLdiagscaleLangloisRemillard]
\draw [line width=0.3mm] (0,-0.7) -- (0,3+.7) ;     	
\draw (0,2) .. controls (-3/4,2) and (-3/4,1) .. (0,1);       	
\draw (0,0) -- (-1,0);
\draw (0,3) --(-1,3);
\end{tikzpicture}\ .
\end{align*}

The algebra $B=(\mathsf{TL}_4(q+q^{-1}),\circledast,\iota)$ keeps its cell filtration~\eqref{eq:baseTLQuatrediagLangloisRemillard}:
\begin{equation}\label{eq:filtrationTLhomLangloisRemillard}
B= \left\langle \
\begin{tikzpicture}[baseline={(current bounding box.center)},scale=\TLdiagscaleLangloisRemillard]
\draw [line width=0.3mm] (0,-0.7) -- (0,3+.7) ;
\draw [line width=0.3mm] (2,-0.7) -- (2,3+.7) ;
\draw (0,0) -- (2,0);       	
\draw (0,1) -- (2,1);       	
\draw (0,2) -- (2,2);       	
\draw (0,3) -- (2,3);       	
\end{tikzpicture}\ \right\rangle \supset \left\langle \
\begin{tikzpicture}[baseline={(current bounding box.center)},scale=\TLdiagscaleLangloisRemillard]
\draw [line width=0.3mm] (0,-0.7) -- (0,3+.7) ;
\draw [line width=0.3mm] (2,-0.7) -- (2,3+.7) ;
\draw (0,0) -- (2,0);       	
\draw (0,1) -- (2,1);       	
\draw (0,2) .. controls (3/4,2) and (3/4,3) .. (0,3);       	
\draw (2,2) .. controls (5/4,2) and (5/4,3) .. (2,3);
\end{tikzpicture}
\ , \ 
\begin{tikzpicture}[baseline={(current bounding box.center)},scale=\TLdiagscaleLangloisRemillard]
\draw [line width=0.3mm] (0,-0.7) -- (0,3+.7) ;
\draw [line width=0.3mm] (2,-0.7) -- (2,3+.7) ;
\draw (0,0) -- (2,0);       	
\draw (0,3) -- (2,3);
\draw (0,2) .. controls (3/4,2) and (3/4,1) .. (0,1);       	
\draw (2,2) .. controls (5/4,2) and (5/4,1) .. (2,1);
\end{tikzpicture}\ , \
\begin{tikzpicture}[baseline={(current bounding box.center)},scale=\TLdiagscaleLangloisRemillard]
\draw [line width=0.3mm] (0,-0.7) -- (0,3+.7) ;
\draw [line width=0.3mm] (2,-0.7) -- (2,3+.7) ;
\draw (0,3) -- (2,3);
\draw (0,2) -- (2,2);       	
\draw (0,1) .. controls (3/4,1) and (3/4,0) .. (0,0);       	
\draw (2,1) .. controls (5/4,1) and (5/4,0) .. (2,0);
\end{tikzpicture}\
 \right\rangle \supset \left\langle 
\  \begin{tikzpicture}[baseline={(current bounding box.center)},scale=\TLdiagscaleLangloisRemillard]
\draw [line width=0.3mm] (0,-0.7) -- (0,3+.7) ;
\draw [line width=0.3mm] (2,-0.7) -- (2,3+.7) ;
\draw (0,0) .. controls (3/4,0) and (3/4,1) .. (0,1);       	
\draw (0,2) .. controls (3/4,2) and (3/4,3) .. (0,3);       	
\draw (2,0) .. controls (5/4,0) and (5/4,1) .. (2,1);
\draw (2,2) .. controls (5/4,2) and (5/4,3) .. (2,3);
\end{tikzpicture}\ , \
\begin{tikzpicture}[baseline={(current bounding box.center)},scale=\TLdiagscaleLangloisRemillard]
\draw [line width=0.3mm] (0,-0.7) -- (0,3+.7) ;
\draw [line width=0.3mm] (2,-0.7) -- (2,3+.7) ;
\draw (0,0) .. controls (1,0) and (1,3) .. (0,3);
\draw (0,1) .. controls (3/4,1) and (3/4,2) .. (0,2);
\draw (2,1) .. controls (1.25,1) and (1.25,2) .. (2,2);
\draw (2,0) .. controls (1,0) and (1,3) .. (2,3);
\end{tikzpicture}\ \right\rangle \supset 0.
\end{equation}

If $q$ is not a root of unity, the algebra is Hom-semisimple. Indeed, variations on the arguments of~\cite{RSALangloisRemillard} let one easily show that each cell module is cyclic and $q$ not being a root of unity implies that each elements of a cell module is a generator, the filtration~\eqref{eq:filtrationTLhomLangloisRemillard} finishes the proof.

It is not surprising as for the Temperley-Lieb algebra, the process of going to Hom-associativity of type II is very similar on the representation theory level to considering the left cell modules as right modules: only one application of $\iota$ separates the two concepts.

Furthermore, for any cellular algebra, the cellular filtration is kept. Applying the faithful functor $F$ to K\"onig's and Xi's definition of cellularity (definition~\ref{dfn:cellularKXLangloisRemillard}) results in the following definition.

\begin{definition}
	Let $(A,\cdot,\alpha)$ be a Hom-associative algebra over an associative, commutative Noetherian integral domain $R$. Assume that there is a anti-involution $\iota$ in $A$. A two-sided Hom-ideal $J$ of $A$ is called a \emph{Hom-cell ideal} if 
	\begin{enumerate}
		\item it is fixed by the anti-involution: $\iota(J) =J$;
		\item there exists a Hom-module $\Delta \subset J$ such that $\Delta$ is finitely generated and free over $R$;
		\item there is an isomorphism of Hom-bimodules $\psi : J \xrightarrow{\ \sim\ } \Delta \otimes_R \iota(\Delta)$.
		\begin{equation}
		\begin{tikzcd}[column sep = large, row sep = large]
			J \arrow[d,"\iota"] \arrow[r, "\psi"] & \Delta \otimes_R
			\iota(\Delta) \arrow[d,"x\otimes y \mapsto \iota(y)\otimes \iota(x)"]\\
			J \arrow[r,"\psi"] & \Delta \otimes_R \iota(\Delta)
		\end{tikzcd}.
		\end{equation}
	\end{enumerate}
The algebra with the anti-involution $\iota$ is called \emph{Hom-cellular} if there is a Hom-$R$-modules decomposition
\begin{equation*}
	A = J_1' \oplus J_2' \oplus \dots \oplus J_n'
\end{equation*}
with $\iota(J_k') = J_k'$ for each $k$ such that setting $J_k = \bigoplus_{l=1}^k J_l'$ gives a chain of Hom-$A$-ideals of $A$
\begin{equation}
	0 = J_0 \subset J_1 \subset \dots \subset J_n = A,
\end{equation}
in which $J_k'$ is a Hom-cell ideal for $A/J_{k-1}$.
\end{definition}

Therefore we see that the functor $F$ preserves the structure for some subfamilies of algebras with anti-involution. Of course this short inquiry only presents arguments for the albeitly trivial case of semisimple cellular in which semisimplicity will be preserved by the faithfulness of the functor and the weaker notion of Hom-semisimplicity, but it hints that further investigation with weaker structures on algebra with anti-involution could preserve a sufficient amount of structure to be studied in the Hom-associativity of type II framework; that it could deform ``enough'' to open new applications is left for further studies.
%
%


\section{Discussion}
\label{sec:discussionLangloisRemillard}
In this independent short section, we will consider an avenue to systematize the study started here. It contains some ideas borrowed from Hellstr\"om, Makhlouf and Silvestrov~\cite{hellstrom_universal_2014LangloisRemillard} about universal algebras and from Yau~\cite{yau_enveloping_2008LangloisRemillard}. Our main interest is the diagrammatic operadic approach that seems fruitful in considering the slight changes in the axiomatic rule that was exemplified here.

That type II associativity arose when one tried to deform a $\iota$-algebra was an unexpected discovery. It justifies the construction of a module theory and consideration of this peculiar type of deformations. It is probably possible to do the same study for other types of Hom-associativity. The goal of this section is hint at a more systematic way to do so.
 
We do not claim to offer the correct way of generalizing these notions. We merely aim to present an interesting piece of material and show an application to Fr\'egier and Gohr hierarchy~\cite{fregier_hom-type_2010LangloisRemillard} where this material proved useful to schematize the proofs.
 
 Take a monoid $M$. Represent its identity map $\mathrm{id}:{M}\to {M}$ and its multiplication $\mu:{M}\times {M}\to {M}$ by the following diagrams:
 	\begin{align}
 		\mathrm{id} &\longrightarrow \tikzdessin{
 		\draw[thick] (0,0) -- (0,1);
 		\fill (0,0) circle(2pt);
 		\fill (0,1) circle(2pt);
 		}, & \mu &\longrightarrow \tikzdessin{
 		\draw[thick] (0,0) .. controls (0,0.5) and (0.5,0.75) .. (0.5,1);
 		\draw[thick] (1,0) .. controls (1,0.5) and (0.5,0.75) .. (0.5,1);
 		\fill (0,0) circle(2pt);
 		\fill (1,0) circle(2pt);
 		\fill (0.5,1) circle(2pt);}.
 	\end{align}

They are read from bottom to top. The diagram of $\mu$ takes two elements and gives back one. The identity condition is implicit here for one can deform the diagram as wanted as long as the topology is left unchanged. To add associativity, there must be a relation equivalent to $(\mu(\mu(a,b),c) =\mu(a,\mu(b,c))$. This is done by:
\begin{align}
	\tikzdessin{
 		\draw[thick] (0,0) .. controls (0,0.5) and (0.5,0.75) .. (0.5,1);
 		\draw[thick] (1,0) .. controls (1,0.5) and (0.5,0.75) .. (0.5,1);
 		\fill (0,0) circle(2pt);
 		\fill (1,0) circle(2pt);
 		\fill (0.5,1) circle(2pt);
 		\draw[thick] (2,0) -- (2,1);
 		\fill (2,0) circle(2pt);
 		\fill (2,1) circle(2pt);
 		\draw[thick] (0.5,1) .. controls (0.5,1.5) and (1.25,1.75) .. (1.25,2);
 		\draw[thick] (2,1) .. controls (2,1.5) and (1.25,1.75) .. (1.25,2);
 		\fill (1.25,2) circle(2pt);
 		} \quad =\quad 
 		\tikzdessin{
 		\draw[thick] (0,0) .. controls (0,0.5) and (-0.5,0.75) .. (-0.5,1);
 		\draw[thick] (-1,0) .. controls (-1,0.5) and (-0.5,0.75) .. (-0.5,1);
 		\fill (0,0) circle(2pt);
 		\fill (-1,0) circle(2pt);
 		\fill (-0.5,1) circle(2pt);
 		\draw[thick] (-2,0) -- (-2,1);
 		\fill (-2,0) circle(2pt);
 		\fill (-2,1) circle(2pt);
 		\draw[thick] (-0.5,1) .. controls (-0.5,1.5) and (-1.25,1.75) .. (-1.25,2);
 		\draw[thick] (-2,1) .. controls (-2,1.5) and (-1.25,1.75) .. (-1.25,2);
 		\fill (-1.25,2) circle(2pt);
 		}\ .
\end{align}

The system is simple for now. To consider Hom-associativity ($\mu(\mu(a,b),\alpha(c))=\mu(\alpha(a),\mu(b,c))$), one must add a diagram for the twisting map $\alpha$ and then use it to obtain a new condition to replace the associativity. It looks like
\begin{align}
	\alpha &\longrightarrow \tikzdessin{
	\draw[thick] (0,0) -- (0,1);
 	\fill (0,0) circle(2pt);
    \fill (0,1) circle(2pt);
 	\fill (0.1,0.4) -- (-0.1,0.4)--(-0.1,0.6) --(0.1,0.6)--cycle;
 	}\ ,\\ \label{eq:homassIdessinLangloisRemillard}
 	\tikzdessin{
 		\draw[thick] (0,0) .. controls (0,0.5) and (0.5,0.75) .. (0.5,1);
 		\draw[thick] (1,0) .. controls (1,0.5) and (0.5,0.75) .. (0.5,1);
 		\fill (0,0) circle(2pt);
 		\fill (1,0) circle(2pt);
 		\fill (0.5,1) circle(2pt);
 		\draw[thick] (2,0) -- (2,1);
 		\fill (2,0) circle(2pt);
 		\fill (2,1) circle(2pt);
 		\fill (2.1,0.4) -- (1.9,0.4)--(1.9,0.6) --(2.1,0.6)--cycle;
 		\draw[thick] (0.5,1) .. controls (0.5,1.5) and (1.25,1.75) .. (1.25,2);
 		\draw[thick] (2,1) .. controls (2,1.5) and (1.25,1.75) .. (1.25,2);
 		\fill (1.25,2) circle(2pt);
 		} \quad &=\quad 
 		\tikzdessin{
 		\draw[thick] (0,0) .. controls (0,0.5) and (-0.5,0.75) .. (-0.5,1);
 		\draw[thick] (-1,0) .. controls (-1,0.5) and (-0.5,0.75) .. (-0.5,1);
 		\fill (0,0) circle(2pt);
 		\fill (-1,0) circle(2pt);
 		\fill (-0.5,1) circle(2pt);
 		\draw[thick] (-2,0) -- (-2,1);
 		\fill (-2,0) circle(2pt);
 		\fill (-2,1) circle(2pt);
 		\fill (-2.1,0.4) -- (-1.9,0.4)--(-1.9,0.6) --(-2.1,0.6)--cycle;
 		\draw[thick] (-0.5,1) .. controls (-0.5,1.5) and (-1.25,1.75) .. (-1.25,2);
 		\draw[thick] (-2,1) .. controls (-2,1.5) and (-1.25,1.75) .. (-1.25,2);
 		\fill (-1.25,2) circle(2pt);
 		}\ .
\end{align}

Hom-associativity of type II ($\mu(\alpha(\mu(a,b)),c) = \mu(a,\alpha(\mu(b,c)))$ is given by the following diagrams:
\begin{align}\label{eq:homassIIdessinLangloisRemillard}
	\tikzdessin{
 		\draw[thick] (0,0) .. controls (0,0.5) and (0.5,0.75) .. (0.5,1);
 		\draw[thick] (1,0) .. controls (1,0.5) and (0.5,0.75) .. (0.5,1);
 		\fill (0,0) circle(2pt);
 		\fill (1,0) circle(2pt);
 		\fill (0.5,1) circle(2pt);
 		\draw[thick] (2,0) -- (2,1);
 		\fill (2,0) circle(2pt);
 		\draw[thick] (0.5,1) -- (0.5,2);
 		\fill (0.6,1.4) -- (0.4,1.4)--(0.4,1.6) --(0.6,1.6)--cycle;
 		\fill (0.5,2) circle(2pt);
 		\fill (2,2) circle(2pt);
 		\draw[thick] (2,1)--(2,2);
 		\draw[thick] (0.5,2) .. controls (0.5,2.5) and (1.25,2.75) .. (1.25,3);
 		\draw[thick] (2,2) .. controls (2,2.5) and (1.25,2.75) .. (1.25,3);
 		\fill (1.25,3) circle(2pt);
 		} \quad &=\quad 
 		\tikzdessin{
 		\draw[thick] (0,0) .. controls (-0,0.5) and (-0.5,0.75) .. (-0.5,1);
 		\draw[thick] (-1,0) .. controls (-1,0.5) and (-0.5,0.75) .. (-0.5,1);
 		\fill (0,0) circle(2pt);
 		\fill (-1,0) circle(2pt);
 		\fill (-0.5,1) circle(2pt);
 		\draw[thick] (-2,0) -- (-2,1);
 		\fill (-2,0) circle(2pt);
 		\draw[thick] (-0.5,1) -- (-0.5,2);
 		\fill (-0.6,1.4) -- (-0.4,1.4)--(-0.4,1.6) --(-0.6,1.6)--cycle;
 		\fill (-0.5,2) circle(2pt);
 		\fill (-2,2) circle(2pt);
 		\draw[thick] (-2,1)--(-2,2);
 		\draw[thick] (-0.5,2) .. controls (-0.5,2.5) and (-1.25,2.75) .. (-1.25,3);
 		\draw[thick] (-2,2) .. controls (-2,2.5) and (-1.25,2.75) .. (-1.25,3);
 		\fill (-1.25,3) circle(2pt);
 		}\ .
\end{align}

 If now the map $\alpha$ is taken to be some anti-involution $\iota$, one will need another application $\sigma : M\times M \to M\times M$ that switches two elements ($\sigma((a,b)) = (b,a)$): 
 \begin{align}
 	\sigma \longrightarrow \tikzdessin{ 
 	\draw[thick] (0,0) -- (1,1);
 	\draw[thick] (1,0) -- (0,1);
 	\fill (0,0) circle(2pt);\fill (0,1) circle(2pt);\fill (1,0) circle(2pt);\fill (1,1) circle(2pt);
 	}
 \end{align}
and add two rules ($\iota\circ \iota = \mathrm{id}$ and $\iota(\mu(a,b)) = \mu(\iota(b),\iota(a))$):
\begin{align}\label{eq:morphismisantiinvolutionLangloisRemillard}
	\tikzdessin{ 
	\draw[thick] (0,0) -- (0,1);
 	\fill (0,0) circle(2pt);
    \fill (0,1) circle(2pt);
 	\fill (0.1,0.4) -- (-0.1,0.4)--(-0.1,0.6) --(0.1,0.6)--cycle;
 	\draw[thick] (0,1) -- (0,2);
 	\fill (0,1) circle(2pt);
    \fill (0,2) circle(2pt);
 	\fill (0.1,1.4) -- (-0.1,1.4)--(-0.1,1.6) --(0.1,1.6)--cycle;
	}\ &= \ \tikzdessin{
	\draw[thick] (0,0) -- (0,1);
 	\fill (0,0) circle(2pt);
    \fill (0,1) circle(2pt);
 	}\ , & 
 	\tikzdessin{ 
 	\draw[thick] (0,0) .. controls (0,0.5) and (0.5,0.75) .. (0.5,1);
    \draw[thick] (1,0) .. controls (1,0.5) and (0.5,0.75) .. (0.5,1);
    \fill (0,0) circle(2pt);
    \fill (1,0) circle(2pt);
    \fill (0.5,1) circle(2pt);
    \draw[thick] (0.5,1) -- (0.5,2);
 	\fill (0.5,2) circle(2pt);
 	\fill (0.6,1.4) -- (0.4,1.4)--(0.4,1.6) --(0.6,1.6)--cycle;    
 	} \quad &= \quad 
 	\tikzdessin{ 
 	\draw[thick] (0,0) -- (1,1);
 	\draw[thick] (1,0) -- (0,1);
 	\fill (0,0) circle(2pt);\fill (0,1) circle(2pt);\fill (1,0) circle(2pt);\fill (1,1) circle(2pt);
 	\draw[thick] (0,1) -- (0,2);
 	\fill (0,1) circle(2pt);
    \fill (0,2) circle(2pt);
 	\fill (0.1,1.4) -- (-0.1,1.4)--(-0.1,1.6) --(0.1,1.6)--cycle;
 	\draw[thick] (1,1) -- (1,2);
 	\fill (1,1) circle(2pt);
    \fill (1,2) circle(2pt);
 	\fill (1.1,1.4) -- (0.9,1.4)--(0.9,1.6) --(1.1,1.6)--cycle;
 	\draw[thick] (0,2) .. controls (0,2.5) and (0.5,2.75) .. (0.5,3);
    \draw[thick] (1,2) .. controls (1,2.5) and (0.5,2.75) .. (0.5,3);
    \fill (0.5,3) circle(2pt);
 	}\ .
\end{align}

Multiple consequences can be derived from those last equations\footnote{Lars Hellstr\"om let a program he wrote for such investigation run for some hours and sent us back around one hundred lemmas. Unfortunately the amount of such lemmas is infinite and only when a previously known set of axiom is reached can a conclusion be drawn.}. For example, proving the hierarchy of Fr\'egier and Gohr amounts to diagrammatic consideration. As an example, the following prove that type I\textsubscript{1} and type II are equivalent only for unital Hom-associative algebras. 

Indeed, unitality amounts in adding one distinguished element $\circ$ such that the following condition is present for the multiplication $\mu$:
\begin{align}
	\tikzdessin{\draw[thick] (0,0.08) .. controls (0,0.5) and (0.5,0.75) .. (0.5,1);
    \draw[thick] (1,0) .. controls (1,0.5) and (0.5,0.75) .. (0.5,1);
    \draw (0,0) circle(2pt);
    \fill (1,0) circle(2pt);
    \fill (0.5,1) circle(2pt);} \ = \tikzdessin{\draw[thick] (0,0) .. controls (0,0.5) and (0.5,0.75) .. (0.5,1);
    \draw[thick] (1,0.08) .. controls (1,0.5) and (0.5,0.75) .. (0.5,1);
    \fill (0,0) circle(2pt);
    \draw (1,0) circle(2pt);
    \fill (0.5,1) circle(2pt);} \ = \  \tikzdessin{\draw[thick] (0,0) -- (0,1);
 		\fill (0,0) circle(2pt);
 		\fill (0,1) circle(2pt);} \ .
\end{align}
With this, one gets from equation~\eqref{eq:homassIdessinLangloisRemillard} (or from equation~\eqref{eq:homassIIdessinLangloisRemillard}) by placing the element $\circ$ in the second place
\begin{align}
	\tikzdessin{
 		\draw[thick] (0.5,0) -- (0.5,1);
 		\fill (0.5,0) circle(2pt);
 		\fill (0.5,1) circle(2pt);
 		\draw[thick] (2,0) -- (2,1);
 		\fill (2,0) circle(2pt);
 		\fill (2,1) circle(2pt);
 		\fill (2.1,0.4) -- (1.9,0.4)--(1.9,0.6) --(2.1,0.6)--cycle;
 		\draw[thick] (0.5,1) .. controls (0.5,1.5) and (1.25,1.75) .. (1.25,2);
 		\draw[thick] (2,1) .. controls (2,1.5) and (1.25,1.75) .. (1.25,2);
 		\fill (1.25,2) circle(2pt);
 		} \quad &=\quad 
 		\tikzdessin{
 		\draw[thick] (-0.5,0) -- (-0.5,1);
 		\fill (-0.5,0) circle(2pt);
 		\fill (-0.5,1) circle(2pt);
 		\draw[thick] (-2,0) -- (-2,1);
 		\fill (-2,0) circle(2pt);
 		\fill (-2,1) circle(2pt);
 		\fill (-2.1,0.4) -- (-1.9,0.4)--(-1.9,0.6) --(-2.1,0.6)--cycle;
 		\draw[thick] (-0.5,1) .. controls (-0.5,1.5) and (-1.25,1.75) .. (-1.25,2);
 		\draw[thick] (-2,1) .. controls (-2,1.5) and (-1.25,1.75) .. (-1.25,2);
 		\fill (-1.25,2) circle(2pt);
 		}\ .
\end{align}
It remains only to apply this new rule to equation~\eqref{eq:homassIIdessinLangloisRemillard} to obtain type I\textsubscript{1} Hom-associativity (or on equation~\eqref{eq:homassIdessinLangloisRemillard} to obtain type II Hom-associativity). 

It is possible to retrieve all their hierarchy in a similar fashion.

\begin{acknowledgement}
The author wish to thank Lars Hellstr\"{o}m for implementing a diagrammatic calculus for Hom-associative algebras of type II and sharing examples of the generated lemmas, and Asmus Bisbo for his comments on an earlier draft. This research benefited from a scholarship from the Fonds de recherche du Qu\'{e}bec -- Nature et technologies 270527 and from the EOS Research Project 30889451. This support is gratefully acknowledged.
\end{acknowledgement}

\bibliographystyle{plain}
\bibliography{Langlois-Remillard_Alexis_2}
\end{document}